\documentclass[preprint]{elsarticle}

\usepackage{amssymb}
\usepackage{amsmath}
\usepackage{amsthm}
\usepackage{enumerate}
\usepackage{graphicx}
\usepackage{amsfonts}

\topmargin=1cm \oddsidemargin=1cm \evensidemargin=1cm
\textwidth=15cm \textheight=20cm

\journal{...}

\newtheorem{theorem}{Theorem}[section]

\newtheorem{lemma}{Lemma}[section]

\newtheorem{cor}{Corollary}[section]
\newtheorem{prop}{Proposition}[section]
\newtheorem{remark}{Remark}[section]

\numberwithin{equation}{section}

\theoremstyle{definition}

\begin{document}

	\begin{frontmatter}

		\title{Perturbed Bernstein-type operators}
		
		\author[1]{Ana-Maria Acu}
		\author[2]{Heiner Gonska}
		\address[1]{Lucian Blaga University of Sibiu, Department of Mathematics and Informatics, Str. Dr. I. Ratiu, No.5-7, RO-550012  Sibiu, Romania, e-mail: anamaria.acu@ulbsibiu.ro}
		\address[2]{University of Duisburg-Essen, Faculty of Mathematics, Bismarckstr. 90, 47057 Duisburg, Germany, e-mail: heiner.gonska@uni-due.de }

		\begin{abstract}
		The present paper deals with  modifications of  Bernstein, Kantorovich, Durrmeyer  and genuine Bernstein-Durrmeyer operators. Some previous results are improved in this study. Direct estimates for these operators by means of the first and second  modulus of continuity are given. Also the asymptotic formulas for the new operators are proved.
			\end{abstract}

		\begin{keyword}
			Approximation by polynomials, Bernstein operators, Kantorovich operators, Durrmeyer operators, Voronovskaya type theorem, first and second order moduli
			\MSC[2010] 41A25, 41A36.
		\end{keyword}
		
	\end{frontmatter}
\section{Introduction }
In  2018  Khosravian-Arab, Dehghan and Eslahchi introduced three modifications of the classical Bernstein operator. In this note we follow their approach, explain it and discuss further relevant, but truly different Bernstein-type operators which have been attracting attention in the past. Thus we will discuss the modifications of the classical Bernstein operators (pointwise defined, preserve linear functions, but not commutative), classical Kantorovich operators (defined on $L_1$, do not preserve linear functions), Durrmeyer operators (globally defined, commutative, do not preserve  linear functions) and genuine Bernstein-Durrmeyer operators (globally defined, also commutative, preserve linear functions).
Only in the Bernstein case we will go one step further and add remarks on a second perturbation created by modifying the classical recursion twice.

The organization of this note follows the lines given above. Before we will give estimates we add two short sections on the recursion for the fundamental functions of the Bernstein operator and on the use of $\omega_2$.

\section{On the recursion for the fundamental functions of Bernstein operators}
For $f\in C[0,1]$ the Bernstein operator $B_n:C[0,1]\to\prod_n$ is given by
$$ B_n(f;x)=\displaystyle\sum_{k=0}^np_{n,k}(x)f\left(\dfrac{k}{n}\right),\,\,x\in[0,1], $$
where the fundamental functions are defined by
$$ p_{n,k}(x):=\left\{\begin{array}{l}\displaystyle{n\choose k}x^k(1-x)^{n-k},\,\,0\leq k\leq n,\,\, x\in[0,1],\\
\vspace{-0.2cm}\\
0,\,\,k<0\textrm{ or }n<k.\end{array}\right. $$
It is well-known that these functions satisfy the recursion
\begin{equation}\label{*}
p_{n,k}(x)=(1-x)p_{n-1,k}(x)+xp_{n-1,k-1}(x),\,\,0\leq k\leq n.
\end{equation}
In particular,
\begin{align*}
&p_{n,0}(x)=(1-x)p_{n-1,0}(x)=(1-x)^n,\\
&p_{n,n}(x)=xp_{n-1,n-1}(x)=x^n.
\end{align*}
This recursion is closely related to the so-called de Casteljau algorithm and other  methods to compute  a value $B_n(f;x)$, $x$ fixed. See \cite{GoLu} for details.

In  \cite{1} the recursion form (\ref{*}) is perturbed by replacing it in the first modification $B_n^{M,1}$ by
\begin{align*}
&p_{n,k}^{M,1}(x)=a(x,n)p_{n-1,k}(x)+a(1-x,n)p_{n-1,k-1}(x),1\leq k\leq n-1,\\
&p_{n,0}^{M,1}(x)=a(x,n)(1-x)^{n-1},\,\,p_{n,n}^{M,1}(x)=a(1-x,n)x^{n-1}.
\end{align*}
Here
$$a(x,n)=a_1(n)x+a_0(n),\,\, n=0,1,\dots, $$
replaces $(1-x)$ in the original formula. In the  papers dealing with this modification (see \cite{AcAg}, \cite{ana3},  \cite{Opris}) the superscript "$M,1$" refers to this first disorder in the recursion.

If we carry out the original recursion once again, we obtain
\begin{align*}
p_{n,k}(x)&=(1-x)p_{n-1,k}(x)+xp_{n-1,k-1}(x)\\
&=(1-x)^2p_{n-2,k}(x)+2x(1-x)p_{n-2,k-1}(x)+x^2p_{n-2,k-2}(x).
\end{align*}
So for the second modification $B_n^{M,2}$
\begin{align*}
(1-x)^2 &\textrm{ is replaced by } b(x,n)=b_2(n)x^2+b_1(n)x+b_0(n),\\
2x(1-x) &\textrm{ is replaced by } d_0(n)x(1-x),\textrm{ and}\\
x^2 &\textrm{ is replaced by } b(1-x,n).
\end{align*}
Fortunately enough, this disorder is always introduced in the last step/two steps only. This means that the first $n-1/n-2$ fundamental functions remain intact, and a somewhat arbitrary perturbation is only introduced in the last/ last two step(s).

The present note is mostly written with the ambition to show how resistant the fundamental functions $p_{n,k}$ are with respect to such unexpected intrusions.

\section{On the use of $\omega_2$}

For many years researchers in approximation theory have been striving to give inequalities with  $\omega_2$  being the dominant expression. Many people simply still ignore this. In this section we will provide a very brief explanation why the use of $\omega_2$ (or related quantities such as the Ditzian-Totik modulus of second order) is indeed the better and more powerful tool from the quantitative point of view.

Exemplarily we will discuss the classical Bernstein operator $B_n$ and start with a very good results by P\u alt\u anea \cite{Pa} who confirmed an earlier conjecture of the second author \cite{G11}, namely that one has
$$ \|B_nf-f\|_{\infty}\leq 1\cdot\omega_2\left(f;\dfrac{1}{\sqrt{n}}\right),\,\,f\in C[0,1],\,\,n\in\mathbb{N}. $$
Here the constant $1$ is best possible.

This implies that the approximation by $B_n$ is of order ${\cal O}\left(\dfrac{1}{n}\right)$ for $f\in C^2[0,1]$, and of order ${\cal O}\left(\dfrac{1}{\sqrt{n}}\right)$ for $f\in C^1[0,1]$, and even for $f\in Lip1=\left\{f\in C[0,1]:\omega_1(f;t)={\cal O}(t)\right\}$. An estimate in terms of $c\cdot\omega_1\left(f;\dfrac{1}{\sqrt{n}}\right)$ only reaches $Lip1$. If $\omega_1(f;t)=o(t)$, then $f$ is a constant.

However, the inequality in terms of $\omega_2$ also shows that one has ${\cal O}\left(\dfrac{1}{\sqrt{n}}\right)$ for $$f\in Lip^{*}1=\left\{f\in C[0,1]:\omega_2(f;t)={\cal O}(t)\right\}.$$
Moreover, $Lip^{*}1 \supsetneqq Lip 1$, so the same order is true for the bigger set $Lip^{*} 1$. An example of a function $g\in Lip^{*}1\setminus Lip 1$ is
$$ g(x)=\left\{\begin{array}{l}0,\,\,x=0,\\
\vspace{-0.2cm}\\
x\log|x|,\,\,0<x\leq 1.\end{array}\right. $$
The problem is at $x=0$. Moreover,
$$ Lip^{*}1\subset\left\{f\in C[0,1]: \omega_1(f;\delta)={\cal O}(\delta\cdot|\log \delta|),\,\delta\to 0 \right\}, $$
the Dini-Lipschitz class. Hence it follows that
$$ Lip^{*} 1 \subset Lip\alpha,\,\, 0<\alpha<1. $$
All this happens inside $C[0,1]$. For $k\geq 1$ this story repeats between $C^{k}[0,1]$ and $C^{k+2}\subset C^{k+1}$, a fact being important when dealing with simultaneous approximation. Much more can be found in the seminal paper of Zygmund \cite{Zyg}.

	\section{The modified Bernstein operator $B_n^{M,1}$}

Recently, H. Khosravian-Arab et al. \cite{1} have introduced  modified Bernstein operators  as follows:
\begin{eqnarray}\label{ne1}
B_n^{M,1}(f,x) = \sum\limits_{k=0}^{n} p_{n,k}^{M,1}(x) \ f\left(\frac{k}{n} \right) , \ \ x \in [0,1].
\end{eqnarray}

Note that throughout the paper we will assume that $B_n^{M,1}(e_0,x)=1$, namely the
sequences $a_i(n), i=0,1$, verify the condition
\begin{equation}\label{A}
2a_0(n)+a_1(n)=1.
\end{equation}


\begin{theorem}
	For $B_n^{M,1}$ given above, $f\in C[0,1]$, $x\in[0,1]$, $n\geq 1$, we have
	$$ |B_n^{M,1}(f;x)-f(x)|\leq |B_n(f;x)-f(x)|+\left|(1+a_1(n))\left(\frac{1}{2}-x\right)\right|\omega_1\left(f;\frac{1}{n}\right).  $$
\end{theorem}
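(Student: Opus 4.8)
The plan is to estimate $B_n^{M,1}$ against the classical Bernstein operator $B_n$ and to show that the whole ``disorder'' in the recursion costs only the announced $\omega_1$-term. Splitting
$$ B_n^{M,1}(f;x)-f(x)=\bigl(B_n^{M,1}(f;x)-B_n(f;x)\bigr)+\bigl(B_n(f;x)-f(x)\bigr) $$
and applying the triangle inequality, it suffices to prove the pointwise bound $|B_n^{M,1}(f;x)-B_n(f;x)|\le\bigl|(1+a_1(n))(\tfrac12-x)\bigr|\,\omega_1(f;\tfrac1n)$. So the first task is to control the difference of the two families of fundamental functions, $p_{n,k}^{M,1}-p_{n,k}$.

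First I would simplify $a(x,n)-(1-x)$ with the help of the normalization $(\ref{A})$. Since $2a_0(n)+a_1(n)=1$ gives $a_0(n)-1=-\tfrac{1+a_1(n)}{2}$ and $a_0(n)+a_1(n)=\tfrac{1+a_1(n)}{2}$, a one-line computation yields, with the shorthand $c:=(1+a_1(n))\bigl(\tfrac12-x\bigr)$,
$$ a(x,n)-(1-x)=-c,\qquad a(1-x,n)-x=c. $$
Substituting this into the perturbed recursion for $1\le k\le n-1$, and treating the two endpoints $k=0$, $k=n$ directly (using $p_{n-1,-1}\equiv p_{n-1,n}\equiv 0$ together with the given formulas $p_{n,0}^{M,1}=a(x,n)(1-x)^{n-1}$ and $p_{n,n}^{M,1}=a(1-x,n)x^{n-1}$), I expect the single uniform identity
$$ p_{n,k}^{M,1}(x)-p_{n,k}(x)=c\,\bigl(p_{n-1,k-1}(x)-p_{n-1,k}(x)\bigr),\qquad 0\le k\le n. $$

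Then I would compute $B_n^{M,1}(f;x)-B_n(f;x)=c\sum_{k=0}^{n}\bigl(p_{n-1,k-1}(x)-p_{n-1,k}(x)\bigr)f(k/n)$ and carry out an index shift: relabelling $j=k-1$ in the first sum, dropping the vanishing terms, and subtracting termwise rewrites this as the Abel-type expression $c\sum_{j=0}^{n-1}p_{n-1,j}(x)\bigl(f(\tfrac{j+1}{n})-f(\tfrac{j}{n})\bigr)$. Since each increment of $f$ over a step of length $1/n$ is bounded by $\omega_1(f;1/n)$, since $p_{n-1,j}(x)\ge 0$, and since $\sum_{j=0}^{n-1}p_{n-1,j}(x)=1$, this gives $|B_n^{M,1}(f;x)-B_n(f;x)|\le|c|\,\omega_1(f;1/n)$, which is exactly the required bound; the splitting of the first paragraph then concludes the proof.

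The only delicate point is bookkeeping rather than analysis: one must verify that the constant in $a(x,n)-(1-x)$ really collapses to $-(1+a_1(n))(\tfrac12-x)$ under $(\ref{A})$, and — crucially — that the two boundary fundamental functions $p_{n,0}^{M,1}$, $p_{n,n}^{M,1}$ obey the \emph{same} identity as the interior ones; otherwise stray boundary contributions would survive the telescoping and destroy the clean factor $\tfrac12-x$. The conceptual content is simply that, after normalization, the perturbation acts on the level of fundamental functions as multiplication by $c$ composed with the forward-difference operator, which is precisely why the error picks up an extra factor proportional to $\tfrac12-x$.
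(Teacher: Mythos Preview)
Your proof is correct and follows essentially the same route as the paper: both split off $|B_n(f;x)-f(x)|$ via the triangle inequality, reduce $B_n^{M,1}(f;x)-B_n(f;x)$ to the telescoped sum $c\sum_{j=0}^{n-1}p_{n-1,j}(x)\bigl(f(\tfrac{j+1}{n})-f(\tfrac{j}{n})\bigr)$ with $c=(1+a_1(n))(\tfrac12-x)$, and finish with $\sum p_{n-1,j}=1$ and $\omega_1$. The only cosmetic difference is that you first isolate the identity $p_{n,k}^{M,1}-p_{n,k}=c\,(p_{n-1,k-1}-p_{n-1,k})$ (including the endpoints) and then sum, whereas the paper expands both operators in full and simplifies the coefficient $-(1+a_1(n))x+a_0(n)+a_1(n)$ to $(1+a_1(n))(\tfrac12-x)$ only at the very end.
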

\begin{proof} We have
	\begin{equation}\label{A1}|B_n^{M,1}(f;x)-f(x)|\leq |B_n(f;x)-f(x)|+|B_n^{M,1}(f;x)-B_n(f;x)|. \end{equation}
	In the following we will give an estimate of the quantity $|B_n^{M,1}(f;x)-B_n(f;x)|$. So,
	\begin{align*}
B_n^{M,1}(f;x)-B_n(f;x)&=\displaystyle\sum_{k=0}^n\{a(x,n)p_{n-1,k}(x)+a(1-x,n)p_{n-1,k-1}(x)\}f\left(\dfrac{k}{n}\right)\\
	&-\displaystyle\sum_{k=0}^n\{(1-x)p_{n-1,k}(x)+xp_{n-1,k-1}(x)\}f\left(\dfrac{k}{n}\right)\\
	&=\displaystyle\sum_{k=0}^{n-1}\{(a_1(n)\!+\!1)x\!+\!a_0(n)\!-\!1\}p_{n-1,k}(x)f\left(\dfrac{k}{n}\right)\\
&	+\displaystyle\sum_{k=1}^n\{(-1-a_1(n))x+a_1(n)+a_0(n)\}p_{n-1,k-1}(x)f\left(\dfrac{k}{n}\right)\\
		&=\displaystyle\sum_{k=0}^{n-1}\{(a_1(n)+1)x+a_0(n)-1\}p_{n-1,k}(x)f\left(\dfrac{k}{n}\right)\\
&	+\displaystyle\sum_{k=0}^{n-1}\{(-(1+a_1(n))x+a_1(n)+a_0(n)\}p_{n-1,k}(x)f\left(\dfrac{k+1}{n}\right)\\
	&=\displaystyle\sum_{k=0}^{n-1}\left[f\left(\dfrac{k+1}{n}\right)-f\left(\dfrac{k}{n}\right)\right]\left\{-(1+a_1(n))x+a_0(n)+a_1(n)\right\}p_{n-1,k}(x).
		\end{align*}
				Therefore,
		\begin{align*} \left|B_n^{M,1}(f;x)-B_n(f;x)\right|&\leq |-(1+a_1(n))x+a_0(n)+a_1(n)|\omega_1\left(f;\dfrac{1}{n}\right)\\
		&=\left|(1+a_1(n))\left(\dfrac{1}{2}-x\right)\right|\omega_1\left(f;\dfrac{1}{n}\right) \end{align*}
		and replacing this estimate in (\ref{A1}) the proof is complete.
	\end{proof}
\begin{remark} \begin{itemize}
		\item[i)] For $a_1(n)=-1$ all the estimates for Bernstein operator $B_n$ hold.
		\item[ii)] If $a_1(n)$ is bounded, say $|a_1(n)|\leq A_1$, then $$|B_n^{M,1}(f;x)-f(x)|\leq |B_n(f;x)-f(x)|+\dfrac{1}{2}(1+A_1)\omega_1\left(f;\dfrac{1}{n}\right).$$
		\item[iii)] If $f\in C^2[0,1]$, then for $a_1(n)$ bounded $\|B_n^{M,1}(f)-f\|_{\infty}={\cal O}\left(\dfrac{1}{n}\right)$. This result is an improvement of \cite[Theorem 9]{1}.
	\end{itemize}
	\end{remark}
In order to prove a quantitative Voronovskaja theorem for $B_n^{M,1}$ we first identify the limit.
\begin{prop} Suppose that $B_n^{M,1}$ is given as above, $f\in C^2[0,1]$, $x\in[0,1]$ and \linebreak $L_1:=\displaystyle\lim_{n\to\infty} a_1(n)$ exists. Then
	$$\displaystyle\lim_{n\to \infty} n\left[B_n^{M,1}(f;x)-f(x)\right]=\displaystyle\frac{x(1-x)}{2}f^{\prime\prime}(x)+\dfrac{1-2x}{2}(1+L_1)f^{\prime}(x).  $$
	\end{prop}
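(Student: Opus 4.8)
The plan is to reduce everything to the classical Voronovskaja theorem for $B_n$ plus an exact evaluation of the correction term $B_n^{M,1}(f;x)-B_n(f;x)$. From the computation in the proof of the previous theorem we already have the closed form
\[
B_n^{M,1}(f;x)-B_n(f;x)=\sum_{k=0}^{n-1}\left[f\!\left(\tfrac{k+1}{n}\right)-f\!\left(\tfrac{k}{n}\right)\right]\bigl\{-(1+a_1(n))x+a_0(n)+a_1(n)\bigr\}p_{n-1,k}(x),
\]
and by condition (\ref{A}) the bracketed coefficient equals $(1+a_1(n))\bigl(\tfrac12-x\bigr)$, which does not depend on $k$. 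Hence
\[
n\bigl[B_n^{M,1}(f;x)-B_n(f;x)\bigr]=(1+a_1(n))\Bigl(\tfrac12-x\Bigr)\sum_{k=0}^{n-1} n\left[f\!\left(\tfrac{k+1}{n}\right)-f\!\left(\tfrac{k}{n}\right)\right]p_{n-1,k}(x).
\]

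The key step is then to show that the sum $S_n(x):=\sum_{k=0}^{n-1} n\left[f(\tfrac{k+1}{n})-f(\tfrac{k}{n})\right]p_{n-1,k}(x)$ converges to $f'(x)$ as $n\to\infty$. I would do this by a Taylor expansion: write $f(\tfrac{k+1}{n})-f(\tfrac{k}{n})=\tfrac1n f'(\tfrac{k}{n})+\tfrac{1}{2n^2}f''(\xi_{n,k})$ for some intermediate point, so that $n[f(\tfrac{k+1}{n})-f(\tfrac{k}{n})]=f'(\tfrac{k}{n})+\tfrac{1}{2n}f''(\xi_{n,k})$. The second term is uniformly $O(1/n)$ since $f''$ is bounded, so it contributes nothing in the limit. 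For the first term, $\sum_{k=0}^{n-1} f'(\tfrac{k}{n})p_{n-1,k}(x)=B_{n-1}\!\left(f'\!\left(\tfrac{\cdot}{n}\right);x\right)$; since $\tfrac{k}{n}=\tfrac{k}{n-1}\cdot\tfrac{n-1}{n}$ differs from the Bernstein node $\tfrac{k}{n-1}$ by a factor tending to $1$ uniformly, and $f'$ is (uniformly) continuous, this sum tends to $f'(x)$ — alternatively one can invoke that $B_{n-1}g\to g$ uniformly for $g\in C[0,1]$ together with a small uniform correction for the node shift. Combining, $n[B_n^{M,1}(f;x)-B_n(f;x)]\to(1+L_1)\bigl(\tfrac12-x\bigr)f'(x)$.

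Finally I would add the classical Voronovskaja relation $\lim_{n\to\infty} n[B_n(f;x)-f(x)]=\tfrac{x(1-x)}{2}f''(x)$, valid for $f\in C^2[0,1]$, to the limit just obtained, giving
\[
\lim_{n\to\infty} n\bigl[B_n^{M,1}(f;x)-f(x)\bigr]=\frac{x(1-x)}{2}f''(x)+\frac{1-2x}{2}(1+L_1)f'(x),
\]
which is the assertion. The main obstacle is the rigorous handling of the node shift $\tfrac{k}{n}$ versus $\tfrac{k}{n-1}$ in the sum $S_n(x)$: one must be careful that replacing $f'(\tfrac{k}{n})$ by $f'(\tfrac{k}{n-1})$ introduces only an $o(1)$ error, which follows from uniform continuity of $f'$ together with $\bigl|\tfrac{k}{n}-\tfrac{k}{n-1}\bigr|\le\tfrac{1}{n-1}\to 0$. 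Everything else is routine once the closed form of the correction term and the classical Voronovskaja theorem are in hand.
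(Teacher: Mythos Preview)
Your proof is correct and follows the same overall decomposition as the paper: split $n[B_n^{M,1}(f;x)-f(x)]$ into the classical Voronovskaja term $n[B_n(f;x)-f(x)]$ plus the correction $n[B_n^{M,1}(f;x)-B_n(f;x)]$, insert the closed form for the latter obtained in the preceding theorem, and pass to the limit.

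The one place you work harder than necessary is in showing $S_n(x)\to f'(x)$. The paper simply notes the standard identity for the derivative of a Bernstein polynomial,
\[
\sum_{k=0}^{n-1} n\left[f\!\left(\tfrac{k+1}{n}\right)-f\!\left(\tfrac{k}{n}\right)\right]p_{n-1,k}(x)=(B_nf)'(x),
\]
and then invokes the classical simultaneous approximation fact $(B_nf)'\to f'$ for $f\in C^1[0,1]$. This bypasses both your Taylor expansion and the node-shift argument $\tfrac{k}{n}$ versus $\tfrac{k}{n-1}$; what you do is correct, but it reproduces by hand a special case of this well-known result. Recognising $S_n=(B_nf)'$ also pays off later in the paper, where the same identity feeds directly into the quantitative Voronovskaja estimate.
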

\begin{proof}
	As above write
	\begin{align*}
	n\left[B_n^{M,1}(f;x)-f(x)\right]&=n\left[B_n(f;x)-f(x)\right]+n\left[B_n^{M,1}(f;x)-B_n(f;x)\right]\\
	&=:T_1(x)+T_2(x).
	\end{align*}
	The limit of $T_1(x)$ is known, i.e., $\dfrac{x(1-x)}{2}f^{\prime\prime}(x)$. Moreover,
	\begin{align*}
	T_2(x)&=\left[-(1+a_1(n))x+a_0(n)+a_1(n)\right]\displaystyle\sum_{k=0}^{n-1}n\left[f\left(\dfrac{k+1}{n}\right)-f\left(\dfrac{k}{n}\right)\right]p_{n-1,k}(x)\\
	&=\left[-(1+a_1(n))x+a_0(n)+a_1(n)\right]\left(B_nf\right)^{\prime}(x).
	\end{align*}
	Hence,
	$$ n\left[B_n^{M,1}(f;x)-f(x)\right]=n\left[B_n(f;x)-f(x)\right] +\left(1+a_1(n)\right)\left(-x+\dfrac{1}{2}\right)\left(B_nf\right)^{\prime}(x). $$
	Since $\displaystyle\lim_{n\to\infty}n\left[B_n(f;x)-f(x)\right]=\dfrac{x(1-x)}{2}f^{\prime\prime}(x)$ and $\displaystyle\lim_{n\to\infty}\left(B_nf\right)^{\prime}(x)=f^{\prime}(x)$, the proof is complete.
\end{proof}
\begin{theorem}
	Suppose that $B_n^{M,1}$ is given as above, $f\in C^2[0,1]$, $L_1=\displaystyle\lim_{n\to\infty}a_1(n)$ exists. Then for $x\in[0,1]$ there holds
	\begin{align*}
	\Delta_n^B&:=\left|n\left[B_n^{M,1}(f;x)-f(x)\right]-\displaystyle\frac{x(1-x)}{2}f^{\prime\prime}(x)-\dfrac{1-2x}{2}(1+L_1)f^{\prime}(x)\right|\\
	&\leq X\left\{\dfrac{5}{6}\dfrac{|X^{\prime}|}{\sqrt{3(n-2)X+1}}\omega_1\left(f^{\prime\prime};\sqrt{\dfrac{3(n-2)X+1}{n^2}}\right)+\dfrac{13}{16}\omega_2\left(f^{\prime\prime};\sqrt{\dfrac{3(n-2)X+1}{n^2}}\right)\right\}\\
	&+\dfrac{|X^{\prime}|}{2}\left\{|L_1-a_1(n)|\cdot \| f^{\prime} \|_{\infty}+ |1+L_1|\cdot\left(\dfrac{13}{4}\omega_2\left(f^{\prime};\dfrac{1}{\sqrt{n}}\right)+\dfrac{1}{\sqrt{n}}\omega_1\left(f^{\prime};\dfrac{1}{\sqrt{n}}\right)\right) \right\}.
		\end{align*}
		Here $X:=x(1-x)$, i.e., $X^{\prime}=1-2x$.
\end{theorem}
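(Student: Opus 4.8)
The plan is to split $\Delta_n^B$ along the same decomposition $n[B_n^{M,1}(f;x)-f(x)] = T_1(x) + T_2(x)$ already used in the proposition, and estimate the distance of each piece from its limit separately. Thus I would write
$$
\Delta_n^B \le \left| T_1(x) - \frac{X}{2}f''(x)\right| + \left| T_2(x) - \frac{X'}{2}(1+L_1)f'(x)\right|,
$$
and bound the first term by a known quantitative Voronovskaja estimate for the classical Bernstein operator $B_n$. The standard such estimate (due to the second author and collaborators) reads $\bigl|n[B_nf-f](x) - \frac{X}{2}f''(x)\bigr| \le X\bigl\{\frac{5}{6}\frac{|X'|}{\sqrt{3(n-2)X+1}}\,\omega_1(f'';h_n) + \frac{13}{16}\,\omega_2(f'';h_n)\bigr\}$ with $h_n = \sqrt{(3(n-2)X+1)/n^2}$, which matches the first line of the claimed bound verbatim; I would simply cite it.

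For the second term, recall from the proposition's proof that $T_2(x) = (1+a_1(n))\bigl(\frac12 - x\bigr)(B_nf)'(x) = -\frac{X'}{2}(1+a_1(n))(B_nf)'(x)$. Hence
$$
T_2(x) - \frac{X'}{2}(1+L_1)f'(x) = -\frac{X'}{2}\Bigl\{(1+a_1(n))(B_nf)'(x) - (1+L_1)f'(x)\Bigr\},
$$
and I would add and subtract $(1+L_1)(B_nf)'(x)$ inside the braces to get two contributions: $(a_1(n)-L_1)(B_nf)'(x)$, handled by $\|(B_nf)'\|_\infty \le \|f'\|_\infty$ (a Bernstein-type inequality for the derivative, valid since $B_n$ reproduces constants and is a contraction on derivatives in the appropriate sense — in fact $(B_nf)' = n\sum (f((k+1)/n)-f(k/n))p_{n-1,k}$ so $\|(B_nf)'\|_\infty \le \|f'\|_\infty$ directly); and $(1+L_1)\bigl((B_nf)'(x) - f'(x)\bigr)$, which needs a quantitative estimate for how fast the derivative of the Bernstein polynomial converges to $f'$. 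For the latter I would invoke the known bound $\|(B_nf)' - f'\|_\infty \le \frac{13}{4}\omega_2(f';1/\sqrt n) + \frac{1}{\sqrt n}\omega_1(f';1/\sqrt n)$ for $f \in C^1[0,1]$ (this is a simultaneous-approximation estimate of the type discussed in Section 3, and its shape matches the last line of the theorem). Collecting the factor $\frac{|X'|}{2}$ in front of both contributions yields exactly the second line of the asserted inequality.

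The main obstacle is not the algebra — that is routine once the decomposition is in place — but ensuring that the two "imported" estimates (the quantitative Voronovskaja for $B_n$ with the precise constants $\frac56, \frac{13}{16}$ and the argument $h_n$, and the simultaneous-approximation bound $\frac{13}{4}\omega_2 + \frac{1}{\sqrt n}\omega_1$ for $(B_nf)'$) are stated in the literature in exactly the form used here; if a cited reference gives a slightly different constant or a $\omega_2^\varphi$ rather than $\omega_2$, one must either re-derive the needed version or absorb the discrepancy. A secondary point requiring a line of care is the degenerate case $x \in \{0,1\}$, where $X = 0$ and $X' = \pm 1$: there the first line vanishes and $(B_nf)'(0) = n(f(1/n)-f(0))$, $(B_nf)'(1) = n(f(1)-f((n-1)/n))$, so one checks directly that the second line still dominates $\Delta_n^B$. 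Everything else is substitution and the triangle inequality.
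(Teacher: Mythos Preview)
Your proposal is correct and follows essentially the same route as the paper: the same triangle-inequality split into the classical Voronovskaja term (handled by the Gonska--Ra\c{s}a estimate) and the correction term, the same identification $T_2(x)=\frac{X'}{2}(1+a_1(n))(B_nf)'(x)$ followed by the same add-and-subtract of $(1+L_1)(B_nf)'(x)$, and the same two ingredients $\|(B_nf)'\|_\infty\le\|f'\|_\infty$ and the simultaneous-approximation bound from \cite{Gonska}. The only slip is a harmless sign error (you wrote $-\frac{X'}{2}$ where $\frac12-x=\frac{X'}{2}$), which disappears under the absolute value.
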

\begin{proof}
	For $\Delta_n^B$ the following inequality holds
	\begin{align}\label{ec1} \Delta_n^B&\leq \left|n\left[B_n(f;x)-f(x)\right]-\dfrac{x(1-x)}{2}f^{\prime\prime}(x) \right|\nonumber\\
	&+
	\left|n\left[B_n^{M,1}(f;x)-B_n(f;x)\right]-\dfrac{1-2x}{2}(1+L_1)f^{\prime}(x)\right|. \end{align}
	Gonska and Ra\c sa \cite{GonskaRasa} obtained a Voronovskaya estimate with first and second modulus of smoothness for Bernstein operator  as follows
	
	\begin{align}
	&\left|n\left[B_n(f;x)-f(x)\right]-\displaystyle\frac{x(1-x)}{2}f^{\prime\prime}(x)\right|\nonumber\\
	&\leq X\left\{\dfrac{5}{6}\dfrac{|X^{\prime}|}{\sqrt{3(n-2)X+1}}\omega_1\left(f^{\prime\prime};\sqrt{\dfrac{3(n-2)X+1}{n^2}}\right)+\dfrac{13}{16}\omega_2\left(f^{\prime\prime};\sqrt{\dfrac{3(n-2)X+1}{n^2}}\right)\right\}.\label{ec2}
	\end{align}
	We estimate the second difference of (\ref{ec1}) as follows
	\begin{align}	&\left|n\left[B_n^{M,1}(f;x)-B_n(f;x)\right]-\dfrac{1-2x}{2}(1+L_1)f^{\prime}(x)\right|\nonumber\\
	&=\left|(1+a_1(n))\left(-x+\dfrac{1}{2}\right)\left(B_nf\right)^{\prime}(x)-\dfrac{1-2x}{2}(1+L_1)f^{\prime}(x)\right|\nonumber\\
	&=\left|\dfrac{1-2x}{2}\left(a_1(n)-L_1\right)\left(B_nf\right)^{\prime}(x)-\dfrac{1-2x}{2}(1+L_1)\left[f^{\prime}(x)-\left(B_nf\right)^{\prime}(x)\right]\right|\nonumber\\
		&\leq\dfrac{|1-2x|}{2}\left\{|L_1-a_1(n)|\cdot |\left(B_nf\right)^{\prime}(x)|+|1+L_1|\left|f^{\prime}(x)-\left(B_nf\right)^{\prime}(x)\right|\right\}.\label{ec3}
	\end{align}
	Moreover, we use (see \cite[Theorem 4.1]{Gonska})
	\begin{equation}\label{ec4}
	\left|f^{\prime}(x)-\left(B_nf\right)^{\prime}(x)\right|\leq \dfrac{13}{4}\omega_2\left(f^{\prime};\dfrac{1}{\sqrt{n}}\right)+\dfrac{1}{\sqrt{n}}\omega_1\left(f^{\prime};\dfrac{1}{\sqrt{n}}\right).
	\end{equation}
	Also, we have
	\begin{equation}
	\label{ec5}\left|(B_nf)^{\prime}(x)\right|=\left|\displaystyle\sum_{k=0}^{n-1}\dfrac{f\left(\dfrac{k+1}{n}\right)-f\left(\dfrac{k}{n}\right)}{\frac{1}{n}}p_{n-1,k}(x)\right|\leq n\omega_1\left(f;\dfrac{1}{n}\right)\leq\| f^{\prime}\|_{\infty}.
	\end{equation}
	Using the relations (\ref{ec1})-(\ref{ec5}) the theorem is proved.
\end{proof}

\begin{cor} We have
	$$\Delta_n^B\leq\left\{\begin{array}{l} {\cal O}\left(\dfrac{1}{\sqrt{n}}\right)+\dfrac{1}{2}\left|L_1-a_1(n) \right|\cdot \| f^{\prime}\|_{\infty},\textrm{ for } f\in C^3[0,1],\\
	\vspace{-0.4cm}\\
	{\cal O}\left(\dfrac{1}{n}\right) +\dfrac{1}{2}\left|L_1-a_1(n) \right|\cdot \| f^{\prime}\|_{\infty},\textrm{ for } f\in C^4[0,1].
		\end{array}\right.  $$
	\end{cor}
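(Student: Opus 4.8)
The proof I have in mind is immediate: both lines are obtained by substituting the smoothness of $f$ into the quantitative Voronovskaja estimate of the preceding theorem and collecting orders. Throughout I would use $X=x(1-x)\le \tfrac14$ and $|X^{\prime}|=|1-2x|\le 1$, so these two factors may be absorbed into absolute constants; moreover $3(n-2)X+1\le \tfrac34(n-2)+1={\cal O}(n)$ uniformly in $x\in[0,1]$, hence the common argument $\sqrt{(3(n-2)X+1)/n^2}={\cal O}(1/\sqrt n)$. The only elementary facts I will invoke are $\omega_1(g;t)\le\|g^{\prime}\|_\infty t$ for $g\in C^1[0,1]$, $\omega_2(g;t)\le\|g^{\prime\prime}\|_\infty t^2$ for $g\in C^2[0,1]$, and $\omega_2(g;t)\le 2\,\omega_1(g;t)$ in general.

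First I would treat $f\in C^3[0,1]$, so that $f^{\prime\prime}\in C^1[0,1]$ and $f^{\prime}\in C^2[0,1]$. In the first brace of the theorem the $\omega_1(f^{\prime\prime};\cdot)$ term equals $\tfrac56\,X|X^{\prime}|\,\|f^{\prime\prime\prime}\|_\infty/n$ after the radical cancels, hence is ${\cal O}(1/n)$; the $\omega_2(f^{\prime\prime};\cdot)$ term is bounded, via $\omega_2\le 2\omega_1$ and $\omega_1(f^{\prime\prime};t)\le\|f^{\prime\prime\prime}\|_\infty t$, by a constant times $X\sqrt{3(n-2)X+1}/n={\cal O}(1/\sqrt n)$. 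So the whole first brace is ${\cal O}(1/\sqrt n)$. In the second brace, $\omega_2(f^{\prime};1/\sqrt n)\le\|f^{\prime\prime\prime}\|_\infty/n$ and $\tfrac1{\sqrt n}\omega_1(f^{\prime};1/\sqrt n)\le\|f^{\prime\prime}\|_\infty/n$, so the part carrying the factor $|1+L_1|$ is ${\cal O}(1/n)$, while the remaining summand is $\tfrac{|X^{\prime}|}{2}|L_1-a_1(n)|\,\|f^{\prime}\|_\infty\le \tfrac12|L_1-a_1(n)|\,\|f^{\prime}\|_\infty$. Adding these contributions gives the first line of the corollary.

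For $f\in C^4[0,1]$ only one ingredient changes: now $f^{\prime\prime}\in C^2[0,1]$, so $\omega_2(f^{\prime\prime};t)\le\|f^{(4)}\|_\infty t^2$ and the $\omega_2(f^{\prime\prime};\cdot)$ term in the first brace improves to a constant times $X(3(n-2)X+1)/n^2={\cal O}(1/n)$; together with the $\omega_1(f^{\prime\prime};\cdot)$ term, which was already ${\cal O}(1/n)$, the first brace becomes ${\cal O}(1/n)$. The second brace is estimated exactly as before (it is already ${\cal O}(1/n)$ apart from the $|L_1-a_1(n)|$ summand, since $f^{\prime}\in C^3[0,1]\subset C^2[0,1]$), and summing yields the second line.

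There is no analytic obstacle here; the argument is pure bookkeeping. The one point worth isolating is \emph{why} the exponent jumps from $1/\sqrt n$ to $1/n$ between the two smoothness classes: for $f\in C^3$ the second-order modulus $\omega_2(f^{\prime\prime};\cdot)$ can only be controlled by ${\cal O}(t)={\cal O}(1/\sqrt n)$ and is the sole bottleneck, whereas the extra regularity $f^{\prime\prime}\in C^2$ upgrades it to ${\cal O}(t^2)={\cal O}(1/n)$; all other terms in the theorem's bound are ${\cal O}(1/n)$ already under $f\in C^3$. So the ``hard part'', such as it is, amounts only to identifying the dominant modulus in each case.
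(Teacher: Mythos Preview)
Your proposal is correct and is exactly the intended argument: the paper states this corollary without proof, and the only way to obtain it is the bookkeeping you describe---substituting the standard modulus bounds $\omega_1(g;t)\le\|g'\|_\infty t$, $\omega_2(g;t)\le\|g''\|_\infty t^2$ (and $\omega_2\le 2\omega_1$ for the $C^3$ case) into the preceding quantitative Voronovskaja theorem and absorbing $X\le\tfrac14$, $|X'|\le 1$. Your identification of the $\omega_2(f'';\cdot)$ term as the sole bottleneck responsible for the $1/\sqrt n$ versus $1/n$ distinction is also exactly right.
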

Theorem 9 in \cite{1} should be reformulated in the following way.
\begin{prop}
	If $B_n^{M,1}$ is given as above (positive or non-positive), then for $f\in B[0,1]$ (bounded functions) holds
	$$ \| B_n^{M,1}(f)-f\|_{\infty}\leq 2\left(3|a_1(n)|+1\right)\omega_1\left(f;\dfrac{1}{\sqrt{n}}\right),\,n\geq 3. $$
	If $(a_1(n))$ is bounded, then 
	$$ \| B_n^{M,1}(f)-f\|_{\infty}={\cal O}(1)\omega_1\left(f;\dfrac{1}{\sqrt{n}}\right),  $$
	and 
	$$ \| B_n^{M,1}(f)-f\|_{\infty}={ o}(1),\textrm{ if } f\in C[0,1]. $$
\end{prop}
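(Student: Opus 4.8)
The strategy is the classical first-modulus estimate for constant-reproducing operators, carried out with the extra care that the $p_{n,k}^{M,1}$ may be negative, so that positivity is unavailable and absolute values must be tracked throughout. Because $B_n^{M,1}(e_0,x)=1$ by the normalization (\ref{A}), for $f\in B[0,1]$ and $x\in[0,1]$ we have $B_n^{M,1}(f;x)-f(x)=\sum_{k=0}^{n}p_{n,k}^{M,1}(x)\big(f(\frac{k}{n})-f(x)\big)$. Instead of bounding each $|p_{n,k}^{M,1}(x)|$ separately, I would re-expand through the perturbed recursion: with the convention $p_{n-1,-1}\equiv p_{n-1,n}\equiv 0$, the identity $p_{n,k}^{M,1}(x)=a(x,n)p_{n-1,k}(x)+a(1-x,n)p_{n-1,k-1}(x)$ holds for all $0\le k\le n$, endpoints included. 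Since $a(x,n)+a(1-x,n)=2a_0(n)+a_1(n)=1$ and $\sum_{k}p_{n-1,k}(x)=1$, this yields
\begin{align*}
B_n^{M,1}(f;x)-f(x)&=a(x,n)\sum_{k=0}^{n-1}p_{n-1,k}(x)\left(f\!\left(\frac{k}{n}\right)-f(x)\right)\\
&\quad+a(1-x,n)\sum_{k=0}^{n-1}p_{n-1,k}(x)\left(f\!\left(\frac{k+1}{n}\right)-f(x)\right),
\end{align*}
so the two inner sums now involve only the honest, nonnegative Bernstein functions $p_{n-1,k}$.

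Next I would estimate each inner sum by the standard inequality $|f(t)-f(x)|\le\left(1+\delta^{-1}|t-x|\right)\omega_1(f;\delta)$ followed by Cauchy--Schwarz, which reduces the matter to the two second moments $\sum_{k}p_{n-1,k}(x)\left(\frac{k}{n}-x\right)^2$ and $\sum_{k}p_{n-1,k}(x)\left(\frac{k+1}{n}-x\right)^2$. A direct computation from the known first and second moments of $B_{n-1}$ gives the former as $\dfrac{(n-2)x(1-x)+x}{n^2}$ and the latter as $\dfrac{(n-2)x(1-x)+1-x}{n^2}$, both bounded by $\dfrac{n+2}{4n^2}$. Choosing $\delta=\dfrac{1}{\sqrt{n}}$ turns the resulting bracketed factor into $1+\dfrac{1}{2}\sqrt{\dfrac{n+2}{n}}$, which is less than $2$ for $n\ge 3$. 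Hence
$$
|B_n^{M,1}(f;x)-f(x)|\le 2\big(|a(x,n)|+|a(1-x,n)|\big)\,\omega_1\!\left(f;\frac{1}{\sqrt{n}}\right),
$$
and, using $a_0(n)=\frac{1}{2}(1-a_1(n))$ from (\ref{A}), one has $|a(x,n)|\le|a_1(n)|+|a_0(n)|\le\frac{1}{2}(3|a_1(n)|+1)$ for $x\in[0,1]$, and likewise for $a(1-x,n)$. Taking the supremum over $x$ gives the first inequality of the proposition.

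The last two statements then follow at once: if $|a_1(n)|\le A_1$, the prefactor $2(3|a_1(n)|+1)$ is bounded by the fixed number $2(3A_1+1)$, which is the $\mathcal{O}(1)\,\omega_1(f;n^{-1/2})$ assertion; and for $f\in C[0,1]$, uniform continuity on $[0,1]$ forces $\omega_1(f;n^{-1/2})\to 0$, hence $\|B_n^{M,1}(f)-f\|_\infty=o(1)$. I expect no genuine obstacle in carrying this out. The only delicate point is that the fundamental functions can change sign, which is exactly why the argument must be routed through $|a(x,n)|$ and $|a(1-x,n)|$ rather than through any positivity of $B_n^{M,1}$; and the most computational step is the elementary bookkeeping for the two shifted second moments above.
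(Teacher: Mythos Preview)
Your argument is correct. Note, however, that the paper itself supplies no proof for this proposition: it is merely stated, as a corrected reformulation of Theorem~9 in \cite{1}, and the text immediately moves on to the next section. So there is no proof in the paper to compare against.

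Your direct route---rewriting $B_n^{M,1}(f;x)-f(x)$ through the perturbed recursion as $a(x,n)$ times one $B_{n-1}$-weighted sum plus $a(1-x,n)$ times a shifted one, then applying the standard inequality $|f(t)-f(x)|\le(1+\delta^{-1}|t-x|)\,\omega_1(f;\delta)$ together with Cauchy--Schwarz on the two shifted second moments---recovers exactly the stated constant. The bookkeeping (both second moments bounded by $(n+2)/(4n^2)$ for $n\ge3$; the bracket $1+\tfrac12\sqrt{(n+2)/n}<2$ for $n\ge3$; and $|a(x,n)|\le|a_1(n)|+|a_0(n)|\le\tfrac12(3|a_1(n)|+1)$ via the normalization~(\ref{A})) is all sound. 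One could alternatively combine Theorem~4.1 with a classical first-modulus estimate for $B_n$, which in fact yields a smaller constant; your computation is the natural way to obtain the inequality precisely as written.
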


\section{The modification $B_n^{M,2}$}

 Khosravian-Arab et al. \cite{1} also introduced a second modification of the Bernstein operator as follows:
\begin{align}\label{TK2}
{B}_n^{M,2}(f;x)&=\displaystyle\sum_{k=0}^{n} {p}_{n,k}^{M,2}(x)f\left(\dfrac{k}{n}\right),
\end{align}
where
\begin{align*}
{p}_{n,k}^{M,2}(x)&=\left[\dfrac{n}{2}x^2+\left(-1-\dfrac{n}{2}\right)x+1\right]p_{n-2,k}(x)+nx(1-x)p_{n-2,k-1}(x)\\
&+\left[\dfrac{n}{2}x^2+\left(-\dfrac{n}{2}+1\right)x\right]p_{n-2,k-2}(x). \end{align*}

\begin{lemma}
	The moments of the  operators $B_n^{M,2}$ are given by
	\begin{itemize}
		\item[i)] ${B}_n^{M,2}(e_0;x)=1$;
		\item[ii)] ${B}_n^{M,2}(e_1;x)=x$;
		\item[iii)] ${B}_n^{M,2}(e_2;x)=x^2+\displaystyle\frac{2x(1-x)}{n^2}$.
	\end{itemize}
\end{lemma}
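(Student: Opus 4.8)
The plan is to compute the three moments directly from the definition \eqref{TK2}, by breaking $p_{n,k}^{M,2}$ into its three summands and shifting the index in the translated Bernstein bases $p_{n-2,k-1}$ and $p_{n-2,k-2}$. Write $A(x):=\frac{n}{2}x^2+\bigl(-1-\frac{n}{2}\bigr)x+1$, $C(x):=nx(1-x)$ and $E(x):=\frac{n}{2}x^2+\bigl(1-\frac{n}{2}\bigr)x$ for the three coefficient polynomials appearing in front of $p_{n-2,k}$, $p_{n-2,k-1}$, $p_{n-2,k-2}$ respectively. The first thing I would record is the identity $A(x)+C(x)+E(x)=1$, valid for all $x$; this is exactly the normalization behind $B_n^{M,2}(e_0;x)=1$, and it is what makes most of the subsequent terms collapse, so part i) is immediate.

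For $j=1,2$ I would use the substitution $k=m+i$ to write, for $i=0,1,2$,
\[
\sum_{k=0}^{n} p_{n-2,k-i}(x)\Bigl(\frac{k}{n}\Bigr)^{j}=\sum_{m=0}^{n-2} p_{n-2,m}(x)\Bigl(\frac{m+i}{n}\Bigr)^{j},
\]
and then insert the classical Bernstein moment identities for the basis $\{p_{n-2,m}\}$, namely $\sum_m p_{n-2,m}(x)=1$, $\sum_m p_{n-2,m}(x)\,m=(n-2)x$, and $\sum_m p_{n-2,m}(x)\,m^2=(n-2)(n-3)x^2+(n-2)x$. Expanding $(m+i)^{j}$ and substituting these gives, for each $i$, an explicit polynomial in $x$; multiplying by $A(x)$, $C(x)$, $E(x)$ as appropriate and adding yields $B_n^{M,2}(e_j;x)$.

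For $e_1$ the computation is short: after pulling out a factor $\tfrac1n$, the terms proportional to $(n-2)x$ recombine into $(n-2)x\,[A(x)+C(x)+E(x)]=(n-2)x$, and the leftover $C(x)+2E(x)$ simplifies to $2x$, so the total is $\tfrac1n[(n-2)x+2x]=x$, which is ii). For $e_2$ one pulls out $\tfrac1{n^2}$; using $A+C+E=1$ absorbs the common part $(n-2)(n-3)x^2+(n-2)x$, and what survives is $C(x)\,[2(n-2)x+1]+E(x)\,[4(n-2)x+4]$. I expect the only real obstacle to be the bookkeeping at this stage: one must verify that the cubic contributions $\pm 2n(n-2)x^3$ coming from $C(x)$ and $E(x)$ cancel, and then collect the quadratic and linear coefficients, which come out to $(5n-8)x^2+(4-n)x$. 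Adding this to $(n-2)(n-3)x^2+(n-2)x$ collapses to $(n^2-2)x^2+2x$, and division by $n^2$ gives $x^2+\frac{2x(1-x)}{n^2}$, which is iii).
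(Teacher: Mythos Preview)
Your computation is correct in all details: the normalization $A+C+E=1$, the reduction of $C+2E$ to $2x$ for part ii), the cancellation of the cubic terms $\pm 2n(n-2)x^3$ and the final collapse to $(n^2-2)x^2+2x$ for part iii) all check out exactly as you describe. The paper itself states this lemma without proof, so there is nothing to compare against; your direct computation via index shifts and the classical moment identities for the Bernstein basis $\{p_{n-2,m}\}$ is the natural argument and supplies what the paper omits.
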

\begin{theorem}
	Let $B_n^{M,2}$ be the modified Bernstein operator defined in (\ref{TK2}).  Then for $f\in C[0,1]$ there holds
	$$ \left\|B_n^{M,2}-f\right\|_{\infty}\leq\left\{ \begin{array}{l}
	\dfrac{1}{8}\omega_1\left(f^{\prime};\dfrac{1}{n}\right)+\dfrac{1}{\sqrt{n-2}}\omega_1\left(f^{\prime};\dfrac{1}{\sqrt{n-2}}\right)+\dfrac{2}{n}\| f^{\prime}\|_{\infty}=o(1),\,\, f\in C^1[0,1],\\
	\vspace{-0.4cm}\\
	{\cal O}\left(\dfrac{1}{n}\right),\,\,f\in C^2[0,1].
		\end{array} \right.  $$
\end{theorem}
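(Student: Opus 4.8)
The plan is to use the fact emphasised in the paper that the perturbation defining $p_{n,k}^{M,2}$ only affects the last two steps of the recursion, so that $B_n^{M,2}$ can be written through the genuine fundamental functions of degree $n-2$. Denoting by $A(x)=\frac n2x^2-\bigl(1+\frac n2\bigr)x+1$, $C(x)=nx(1-x)$ and $B(x)=\frac n2x^2+\bigl(1-\frac n2\bigr)x$ the three coefficient functions occurring in $p_{n,k}^{M,2}$, a shift of the summation index gives
\[
B_n^{M,2}(f;x)=\sum_{j=0}^{n-2}p_{n-2,j}(x)\Bigl[A(x)\,f\bigl(\tfrac jn\bigr)+C(x)\,f\bigl(\tfrac{j+1}{n}\bigr)+B(x)\,f\bigl(\tfrac{j+2}{n}\bigr)\Bigr].
\]
I would next record the elementary identities $A+B+C=1$, $B-A=2x-1$ and $A+B=1-nx(1-x)$ (a convenient check is that they reproduce the moments of the preceding Lemma). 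Eliminating $C$ through $C=1-A-B$ and writing $A=\frac{A+B}{2}+\frac{A-B}{2}$, $B=\frac{A+B}{2}-\frac{A-B}{2}$, I would regroup the inner bracket around the middle node $t_{j+1}:=\frac{j+1}{n}$ as
\[
A\,f(t_j)+C\,f(t_{j+1})+B\,f(t_{j+2})-f(t_{j+1})=\tfrac{1-nx(1-x)}{2}\,\Delta^2_{1/n}f(t_{j+1})+\tfrac{1-2x}{2}\bigl[f(t_j)-f(t_{j+2})\bigr],
\]
where $t_j=\frac jn$, $t_{j+2}=\frac{j+2}{n}$ and $\Delta^2_{1/n}f(t_{j+1})=f(t_j)-2f(t_{j+1})+f(t_{j+2})$.

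Summing over $j$ against the weights $p_{n-2,j}(x)\ge0$ and subtracting $f(x)$ then splits $B_n^{M,2}(f;x)-f(x)$ into the three pieces $\sum_j p_{n-2,j}(x)f(t_{j+1})-f(x)$, $\ \tfrac{1-nx(1-x)}{2}\sum_j p_{n-2,j}(x)\Delta^2_{1/n}f(t_{j+1})$, and $\ \tfrac{1-2x}{2}\sum_j p_{n-2,j}(x)\bigl[f(t_j)-f(t_{j+2})\bigr]$, which I would estimate term by term for $f\in C^1[0,1]$. For the last two I use $|\Delta^2_{1/n}f(t_{j+1})|\le\frac1n\omega_1\bigl(f';\frac1n\bigr)$ and $|f(t_j)-f(t_{j+2})|\le\frac2n\|f'\|_\infty$, together with $\frac{|1-nx(1-x)|}{2}\le\frac n8$ (valid for $n\ge4$, since $nx(1-x)\in[0,n/4]$) and $\frac{|1-2x|}{2}\le\frac12$; this contributes $\frac18\omega_1\bigl(f';\frac1n\bigr)+\frac1n\|f'\|_\infty$. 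For the first piece I would centre at the mean $\mu:=\sum_j p_{n-2,j}(x)t_{j+1}=\frac{(n-2)x+1}{n}$, so that the first-order term disappears, and then combine Taylor's formula for $f'$, the Cauchy--Schwarz inequality, the property $\omega_1(f';\lambda\delta)\le(1+\lambda)\omega_1(f';\delta)$ with $\delta=\frac1{\sqrt{n-2}}$, and the identity $\sum_j p_{n-2,j}(x)(t_{j+1}-\mu)^2=\frac{(n-2)x(1-x)}{n^2}$ (the second central moment of $B_{n-2}$) to obtain $\bigl|\sum_j p_{n-2,j}(x)\bigl(f(t_{j+1})-f(\mu)\bigr)\bigr|\le\frac{3}{4\sqrt{n-2}}\,\omega_1\bigl(f';\frac1{\sqrt{n-2}}\bigr)\le\frac1{\sqrt{n-2}}\,\omega_1\bigl(f';\frac1{\sqrt{n-2}}\bigr)$, while $|f(\mu)-f(x)|\le|\mu-x|\,\|f'\|_\infty=\frac{|1-2x|}{n}\|f'\|_\infty\le\frac1n\|f'\|_\infty$. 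Adding the four resulting pieces gives the first line of the statement, and the $o(1)$ claim then follows at once from the uniform continuity of $f'$. For $f\in C^2[0,1]$ it suffices to insert $\omega_1(f';\delta)\le\|f''\|_\infty\,\delta$ into this bound, so that $\frac18\omega_1\bigl(f';\frac1n\bigr)$, $\frac1{\sqrt{n-2}}\omega_1\bigl(f';\frac1{\sqrt{n-2}}\bigr)\le\frac{\|f''\|_\infty}{n-2}$ and $\frac2n\|f'\|_\infty$ are all ${\cal O}(1/n)$.

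The point on which the argument really hinges — and the step I expect to need the most care — is that $B_n^{M,2}$ is \emph{not} a positive operator: the weights $A(x)$ and $B(x)$ are of magnitude $\asymp n$ near the centre of $[0,1]$ and change sign there, so one cannot bound $\bigl|B_n^{M,2}(f-\ell_x;x)\bigr|$ (with $\ell_x$ the tangent line at $x$) by $B_n^{M,2}(|f-\ell_x|;x)$ and read off ${\cal O}(1/n)$ directly from the very small second central moment $\frac{2x(1-x)}{n^2}$. The regrouping around $t_{j+1}$ is exactly what rescues the estimate: it forces the dangerous coefficient $\frac{1-nx(1-x)}{2}$ to multiply a \emph{second} difference, which for a $C^1$-function is already ${\cal O}\bigl(\frac1n\omega_1(f';\frac1n)\bigr)$, so the product stays bounded, while every remaining coefficient is bounded by $1$. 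Pinning down the precise constants $\tfrac18$, $1$ and $2$ is then only the routine bookkeeping sketched above (and needs, for instance, $n\ge4$ for the constant $\tfrac18$).
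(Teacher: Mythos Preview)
Your proof is correct and reaches exactly the stated bound, but it follows a somewhat different route from the paper's.

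Both you and the paper start from the same representation $B_n^{M,2}(f;x)=\sum_{j=0}^{n-2}p_{n-2,j}(x)\bigl[A(x)f(t_j)+C(x)f(t_{j+1})+B(x)f(t_{j+2})\bigr]$ and isolate a second--difference term multiplied by a coefficient of size $\asymp n$. The difference lies in the remainder. You centre at the middle node $t_{j+1}$, so your leftover is $\sum_j p_{n-2,j}(x)f(t_{j+1})-f(x)$ plus the first--difference piece with the bounded coefficient $\tfrac{1-2x}{2}$; you then estimate the former directly by Taylor expansion about the mean $\mu=\tfrac{(n-2)x+1}{n}$ together with Cauchy--Schwarz on the second central moment $\tfrac{(n-2)x(1-x)}{n^2}$. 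The paper instead writes the remainder as the convex combination $(1-x)f(t_j)+xf(t_{j+2})$, centres at the genuine Bernstein nodes $j/(n-2)$, and recognises $\sum_j p_{n-2,j}(x)\bigl[f(j/(n-2))-f(x)\bigr]=B_{n-2}(f;x)-f(x)$, to which P\u alt\u anea's inequality $\|B_mf-f\|_\infty\le\omega_2(f;1/\sqrt m)$ applies; the small shifts $|t_j-j/(n-2)|,\,|t_{j+2}-j/(n-2)|\le 2/n$ produce the $\omega_1(f;2/n)$ term.

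What each approach buys: the paper obtains the intermediate inequality
\[
\bigl|B_n^{M,2}(f;x)-f(x)\bigr|\le \tfrac{n}{8}\,\omega_2\!\left(f;\tfrac1n\right)+\omega_2\!\left(f;\tfrac{1}{\sqrt{n-2}}\right)+\omega_1\!\left(f;\tfrac{2}{n}\right),
\]
valid for \emph{all} $f\in C[0,1]$, and only then specialises to $C^1$ and $C^2$; this is in line with the paper's emphasis on $\omega_2$ as the sharper tool. Your argument, by contrast, is self--contained (no appeal to P\u alt\u anea), works directly in $C^1$, and yields the stated constants $\tfrac18$, $1$, $\tfrac2n$ with the same mild restriction $n\ge4$ that is implicit in the paper's use of $\tfrac{nx(1-x)}{2}\le\tfrac n8$. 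Your remark that positivity fails and that the regrouping is precisely what rescues the estimate is well taken and matches the spirit of the paper's Section~2.
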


\begin{proof}
	We have 
	\begin{align*}
	\left|B_n^{M,2}(f;x)-f(x)\right|
&	=\left|\displaystyle\sum_{k=0}^{n-2}p_{n-2,k}(x)\left\{\dfrac{nx(x-1)}{2}\left[f\left(\dfrac{k}{n}\right)-2f\left(\dfrac{k+1}{n}\right)+f\left(\dfrac{k+2}{n}\right)\right]\right.\right.\\
	&\left.\left.+(1-x)f\left(\dfrac{k}{n}\right)+xf\left(\dfrac{k+2}{n}\right)-f(x)\right\}\right|\\
	&	=\left|\displaystyle\sum_{k=0}^{n-2}p_{n-2,k}(x)\left\{\dfrac{nx(x-1)}{2}\left[f\left(\dfrac{k}{n}\right)-2f\left(\dfrac{k+1}{n}\right)+f\left(\dfrac{k+2}{n}\right)\right]\right.\right.\\
	&+(1-x)\left[f\left(\dfrac{k}{n-2}\right)-f(x)\right]+(1-x)\left[f\left(\dfrac{k}{n}\right)-f\left(\dfrac{k}{n-2}\right)\right]\\
	&\left.\left.+x\left[f\left(\dfrac{k}{n-2}\right)-f(x)\right]
	+x\left[f\left(\dfrac{k+2}{n}\right)-f\left(\dfrac{k}{n-2}\right)\right]\right\}\right|.
	\end{align*}
	Using the relation (see \cite{Pa})
	 $$ \| B_n(f)-f\|_{\infty}\leq \omega_2\left(f;\dfrac{1}{\sqrt{n}}\right),$$
	 we obtain
	 \begin{align*}
	 \left|B_n^{M,2}(f;x)-f(x)\right|&\leq \dfrac{nx(1-x)}{2}\omega_2\left(f;\dfrac{1}{n}\right)+(1-x)\omega_2\left(f;\dfrac{1}{\sqrt{n-2}}\right)+x\omega_2\left(f;\dfrac{1}{\sqrt{n-2}}\right)\\
	 &+(1-x)\omega_1\left(f;\dfrac{2}{n}\right)+x\omega_1\left(f;\dfrac{2}{n}\right)\\
	 &=\dfrac{nx(1-x)}{2}\omega_2\left(f;\dfrac{1}{n}\right)+\omega_2\left(f;\dfrac{1}{\sqrt{n-2}}\right)+\omega_1\left(f;\dfrac{2}{n}\right)\\
	 &\leq \dfrac{n}{8}\omega_2\left(f;\dfrac{1}{n}\right)+\omega_2\left(f;\dfrac{1}{\sqrt{n-2}}\right)+\omega_1\left(f;\dfrac{2}{n}\right),\textrm{ for } f\in C[0,1],
	 \end{align*}
	 and the theorem is proved.
\end{proof}
\begin{remark}
	The above inequality is an improvement and a generalization of \cite[Theorem 14]{1}. There a non-quantitative statement is obtained for $f\in C^2[0,1]$ only.
\end{remark}

\section{The modified  Kantorovich operators $K_n^{M,1}$}

An integral modification of Bernstein operators  was introduced by Kantorovich \cite{K} as follows:

\begin{equation}
\label{K}
K_{n}(f;x)=(n+1)\displaystyle\sum_{k=0}^n{p}_{n,k}(x)\int_{\frac{k}{n+1}}^{\frac{k+1}{n+1}}f(t)dt.
\end{equation}
In a recent article of the present authors \cite{AcGo} the results on these mapping were supplemented. Here we only mention one result from there.
Applying P\u alt\u anea's result \cite[Corollary 2.2.1]{Pa} the following estimate in terms of the first and second modulus of continuity for the classical Kantorovich operators is obtained.

\begin{theorem}
	For $n\geq 1$ and all $f\in C[0,1]$ there holds
	$$ \| K_nf-f\|_{\infty}\leq \dfrac{1}{2\sqrt{n+1}}\omega_1\left(f;\dfrac{1}{\sqrt{n+1}}\right)+\dfrac{9}{8}\omega_2\left(f;\dfrac{1}{\sqrt{n+1}}\right).
	$$
\end{theorem}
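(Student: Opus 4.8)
The plan is to invoke P\u alt\u anea's general estimate \cite[Corollary 2.2.1]{Pa} for positive linear operators reproducing constants, which in the form needed here reads: if $L:C[0,1]\to C[0,1]$ satisfies $Le_0=e_0$, then for all $f\in C[0,1]$, $x\in[0,1]$ and $h\in(0,1]$,
$$|L(f;x)-f(x)|\leq \frac{|L(e_1-xe_0;x)|}{h}\,\omega_1(f;h)+\left(1+\frac{1}{2h^2}\,L\bigl((e_1-xe_0)^2;x\bigr)\right)\omega_2(f;h).$$
With this in hand the whole matter reduces to (i) checking $K_ne_0=e_0$, (ii) computing and bounding the first and second central moments of $K_n$, and (iii) choosing $h$; taking a supremum over $x$ then finishes the proof.

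For (i), $K_ne_0=e_0$ is immediate since $(n+1)\int_{k/(n+1)}^{(k+1)/(n+1)}dt=1$ and $\sum_k p_{n,k}(x)=1$. For the first moment, $(n+1)\int_{k/(n+1)}^{(k+1)/(n+1)}t\,dt=\frac{2k+1}{2(n+1)}$ together with $\sum_k k\,p_{n,k}(x)=nx$ gives $K_n(e_1;x)=\frac{2nx+1}{2(n+1)}$, hence $K_n(e_1-xe_0;x)=\frac{1-2x}{2(n+1)}$ and $|K_n(e_1-xe_0;x)|\leq\frac{1}{2(n+1)}$. For the second moment, $(n+1)\int_{k/(n+1)}^{(k+1)/(n+1)}t^2\,dt=\frac{3k^2+3k+1}{3(n+1)^2}$ and $\sum_k k^2p_{n,k}(x)=n(n-1)x^2+nx$ yield $K_n(e_2;x)=\frac{3n(n-1)x^2+6nx+1}{3(n+1)^2}$; combining with the expression for $K_n(e_1;x)$ and collecting terms gives the clean formula
$$K_n\bigl((e_1-xe_0)^2;x\bigr)=\frac{3(n-1)x(1-x)+1}{3(n+1)^2}.$$
Since $x(1-x)\leq\frac14$ and $3(n-1)+4=3n+1\leq 3(n+1)$, this is at most $\frac{3n+1}{12(n+1)^2}\leq\frac{1}{4(n+1)}$. (These moment identities are also available in \cite{AcGo}.)

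Finally, with $h=\frac{1}{\sqrt{n+1}}\in(0,1]$ for $n\geq 1$, the $\omega_1$-coefficient becomes $\sqrt{n+1}\cdot\frac{1}{2(n+1)}=\frac{1}{2\sqrt{n+1}}$, and the $\omega_2$-coefficient becomes $1+\frac{n+1}{2}\cdot\frac{1}{4(n+1)}=1+\frac18=\frac98$. Substituting into P\u alt\u anea's inequality and taking the supremum over $x\in[0,1]$ gives exactly the asserted bound. The only real computation is the second central moment: the algebra in the coefficient of $x^2$ must collapse to the form $\frac{3(n-1)x(1-x)+1}{3(n+1)^2}$, since it is precisely this that makes the $\omega_2$-constant come out to $\frac98$ after bounding by $\frac{h^2}{4}$; everything else is routine bookkeeping plus a direct appeal to the cited corollary.
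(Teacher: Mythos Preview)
Your proof is correct and follows exactly the approach indicated in the paper: the paper does not give a detailed argument here but simply states that the estimate follows from P\u alt\u anea's \cite[Corollary 2.2.1]{Pa} (referring to \cite{AcGo} for details), which is precisely what you carry out by computing the first two central moments of $K_n$ and specializing $h=1/\sqrt{n+1}$. Your moment computations and bounds are accurate, and the resulting constants $\tfrac{1}{2\sqrt{n+1}}$ and $\tfrac{9}{8}$ match.
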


Recently, a Kantorovich variant of  the modified
Bernstein operators (\ref{ne1}) was investigated in \cite{ana3}. These operators are given by
$$  K_n^{M,1}(f;x):=(n+1)\sum_{k=0}^np_{n,k}^{M,1}(x)\int_{\frac{k}{n+1}}^{\frac{k+1}{n+1}}f(t)dt. $$
A certain Stancu modification was  introduced by Opri\c s \cite{Opris}.

\begin{theorem}
	For $K_n^{M,1}$ given above, $f\in C[0,1]$, $x\in[0,1]$, $n\geq 1$, we have
	$$ |K_n^{M,1}(f;x)-f(x)|\leq |K_n(f;x)-f(x)|+\left|(1+a_1(n))\left(\frac{1}{2}-x\right)\right|\omega_1\left(f;\frac{1}{n+1}\right).  $$
\end{theorem}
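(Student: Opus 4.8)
The plan is to mimic exactly the proof of Theorem 4.1 (the analogous statement for $B_n^{M,1}$), since the modification in the fundamental functions $p_{n,k}^{M,1}$ is identical and only the ``sampling'' of $f$ at the nodes $k/n$ is replaced by averaging over the intervals $[\,k/(n+1),(k+1)/(n+1)\,]$. First I would write the triangle inequality
\begin{equation*}
|K_n^{M,1}(f;x)-f(x)|\leq |K_n(f;x)-f(x)|+|K_n^{M,1}(f;x)-K_n(f;x)|,
\end{equation*}
so that everything reduces to estimating the difference $K_n^{M,1}(f;x)-K_n(f;x)$.

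Next I would substitute the perturbed recursion $p_{n,k}^{M,1}(x)=a(x,n)p_{n-1,k}(x)+a(1-x,n)p_{n-1,k-1}(x)$ (with $a(x,n)=a_1(n)x+a_0(n)$ and the boundary cases $p_{n,0}^{M,1}$, $p_{n,n}^{M,1}$) into the definition of $K_n^{M,1}$, subtract the classical $K_n$ written via the ordinary recursion $p_{n,k}(x)=(1-x)p_{n-1,k}(x)+xp_{n-1,k-1}(x)$, and collect terms. Writing $c_k:=(n+1)\int_{k/(n+1)}^{(k+1)/(n+1)}f(t)\,dt$ in place of the values $f(k/n)$ in the proof of Theorem 4.1, the same index shift $k\mapsto k+1$ in the second sum that was used there will telescope the expression into
\begin{equation*}
K_n^{M,1}(f;x)-K_n(f;x)=\{-(1+a_1(n))x+a_0(n)+a_1(n)\}\sum_{k=0}^{n-1}(c_{k+1}-c_k)\,p_{n-1,k}(x),
\end{equation*}
using the normalization $2a_0(n)+a_1(n)=1$ to simplify the scalar coefficient to $(1+a_1(n))(\tfrac12-x)$, exactly as before.

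The only genuinely new point — and the step I expect to be the mild obstacle — is bounding $|c_{k+1}-c_k|$. Since
\begin{equation*}
c_{k+1}-c_k=(n+1)\int_{0}^{1/(n+1)}\Bigl[f\Bigl(\tfrac{k+1}{n+1}+t\Bigr)-f\Bigl(\tfrac{k}{n+1}+t\Bigr)\Bigr]dt,
\end{equation*}
the integrand has increment $1/(n+1)$ in its argument, so $|c_{k+1}-c_k|\leq\omega_1(f;\tfrac{1}{n+1})$; the averaging does not enlarge the oscillation. Then $|\sum_{k=0}^{n-1}(c_{k+1}-c_k)p_{n-1,k}(x)|\leq\omega_1(f;\tfrac{1}{n+1})$ because $\sum_k p_{n-1,k}(x)=1$, and multiplying by $|(1+a_1(n))(\tfrac12-x)|$ and inserting into the triangle inequality finishes the proof. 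One should note that the modulus argument is $1/(n+1)$ rather than $1/n$, reflecting the Kantorovich node spacing, which is consistent with the statement; the rest of the argument is word-for-word parallel to Theorem 4.1.
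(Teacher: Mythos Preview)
Your proposal is correct and follows essentially the same route as the paper: the paper also splits via the triangle inequality, expands $K_n^{M,1}-K_n$ through the two recursions, shifts the index to collect the scalar factor $(1+a_1(n))\bigl(\tfrac12-x\bigr)$, and rewrites the resulting integral difference as $(n+1)\int_{(k+1)/(n+1)}^{(k+2)/(n+1)}\bigl[f(t)-f(t-\tfrac{1}{n+1})\bigr]\,dt$ to extract $\omega_1\!\left(f;\tfrac{1}{n+1}\right)$. Your shorthand $c_k$ and the change of variable in the integral are cosmetic; the argument is otherwise identical.
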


\begin{proof}
Again we start with 
	\begin{equation}\label{y1} \left| K_n^{M,1}(f;x)-f(x) \right|\leq \left| K_n(f;x)-f(x) \right|+\left| K_n^{M,1}(f;x)-K_n(f;x) \right|. \end{equation}
	In the following we will estimate the quantity $\left| K_n^{M,1}(f;x)-K_n(f;x) \right|$.  
	
One has
	\begin{align*}
	& K_n^{M,1}(f;x)-K_n(f;x)=(n+1)\sum_{k=0}^n\left\{\left(a(x,n)p_{n-1,k}(x)+a(1-x,n)p_{n-1,k-1}(x)\right)\int_{\frac{k}{n+1}}^{\frac{k+1}{n+1}}f(t)dt\right.\\
	&\left.-\left((1-x)p_{n-1,k}(x)+xp_{n-1,k-1}(x)\right)\int_{\frac{k}{n+1}}^{\frac{k+1}{n+1}}f(t)dt\right\}\\
	&=(n+1)\sum_{k=0}^{n-1}\left[(a_1(n)+1)x-(a_0(n)+a_1(n))\right]p_{n-1,k}(x)\int_{\frac{k}{n+1}}^{\frac{k+1}{n+1}}f(t)dt\\
	&-(n+1)\sum_{k=0}^{n-1}\left[(a_1(n)+1)x-(a_0(n)+a_1(n))\right]p_{n-1,k}(x)\int_{\frac{k+1}{n+1}}^{\frac{k+2}{n+1}}f(t)dt\\
	&=(n+1)\sum_{k=0}^{n-1}\left[-(a_1(n)+1)x+(a_0(n)+a_1(n))\right]p_{n-1,k}(x)\left[\int_{\frac{k+1}{n+1}}^{\frac{k+2}{n+1}}f(t)dt-\int_{\frac{k}{n+1}}^{\frac{k+1}{n+1}}f(t)dt\right]\\
	&=(n+1)\sum_{k=0}^{n-1}\left[-(a_1(n)+1)x+(a_0(n)+a_1(n))\right]p_{n-1,k}(x)\int_{\frac{k+1}{n+1}}^{\frac{k+2}{n+1}}\left[f(t)-f\left(t-\frac{1}{n+1}\right)\right]dt.
	\end{align*}
	Therefore,
	\begin{equation}\label{y2} \left| K_n^{M,1}(f;x)-K_n(f;x) \right|\leq \left|(1+a_1(n))\left(\frac{1}{2}-x\right)\right|\omega_1\left(f;\frac{1}{n+1}\right). \end{equation}
	From (\ref{y1}) and (\ref{y2}) it follows that for all cases of $K_n^{M,1}$ (positive and non-positive) we have
	$$ |K_n^{M,1}(f;x)-f(x)|\leq |K_n(f;x)-f(x)|+\left|(1+a_1(n))\left(\frac{1}{2}-x\right)\right|\omega_1\left(f;\frac{1}{n+1}\right).  $$
\end{proof}

\begin{remark} \begin{itemize}
		\item[i)] For $a_1(n)=-1$ all the estimates for Kantorovich operator $K_n$ hold.
		\item[ii)] If $a_1(n)$ is bounded, say $|a_1(n)|\leq A_1$, then $$|K_n^{M,1}(f;x)-f(x)|\leq |K_n(f;x)-f(x)|+\dfrac{1}{2}(1+A_1)\omega_1\left(f;\dfrac{1}{n+1}\right).$$
		\item[iii)] If $f\in C^2[0,1]$, then for $a_1(n)$ bounded $\|K_n^{M,1}(f)-f\|_{\infty}={\cal O}\left(\dfrac{1}{n}\right)$. This result is an improvement of \cite[Theorem 2.6]{ana3}.
	\end{itemize}
\end{remark}

We will give next a Voronovskaya-type result for the modifications $K_n^{M,1}$.
\begin{theorem}
	Suppose that  $f\in C^2[0,1]$ and $L_1=\displaystyle\lim_{n\to\infty}a_1(n)$ exists. Then for $x\in[0,1]$ there holds
	\begin{align*}
	&\Delta_n^K:=\left|n\left[K_n^{M,1}(f;x)-f(x)\right]-\displaystyle\frac{X}{2}f^{\prime\prime}(x)-\dfrac{X^{\prime}}{2}(2+L_1)f^{\prime}(x)\right|\\
	&\leq \dfrac{2}{3(n+1)}\left(\dfrac{3}{4}\|f^{\prime}\|_{\infty}+\|f^{\prime\prime}\|_{\infty}\right)\nonumber
	+\dfrac{9}{32}\left\{\dfrac{2}{\sqrt{n+1}}\omega_1\left(f^{\prime\prime};\dfrac{1}{\sqrt{n+1}}\right)+\omega_2\left(f^{\prime\prime};\dfrac{1}{\sqrt{n+1}}\right)\right\}\\
	&+\!	\dfrac{1}{2}| L_1\!-\!a_1(n)|\cdot \| f^{\prime}\|_{\infty}
	+\dfrac{1}{2}|L_1\!+\!1|\left\{\dfrac{1}{n+1}\| f^{\prime}\|_{\infty}\!+\!\dfrac{1}{\sqrt{n+1}}\omega_1\left(f^{\prime};\dfrac{1}{\sqrt{n+1}}\right)+\dfrac{9}{8}\omega_2\left(f^{\prime};\dfrac{1}{\sqrt{n+1}}\right)\right\},
	\end{align*}
	where $X:=x(1-x)$, i.e., $X^{\prime}=1-2x$.
\end{theorem}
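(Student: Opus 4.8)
The plan is to mimic exactly the structure of the Voronovskaya proof already carried out for $B_n^{M,1}$ (Theorem 4.3), replacing the Bernstein ingredients with their Kantorovich counterparts. First I would split
$$
n\left[K_n^{M,1}(f;x)-f(x)\right]=n\left[K_n(f;x)-f(x)\right]+n\left[K_n^{M,1}(f;x)-K_n(f;x)\right],
$$
and correspondingly bound $\Delta_n^K$ by the sum of
$\left|n\left[K_n(f;x)-f(x)\right]-\tfrac{X}{2}f''(x)\right|$
and
$\left|n\left[K_n^{M,1}(f;x)-K_n(f;x)\right]-\tfrac{X'}{2}(2+L_1)f'(x)\right|$.
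The first term is handled by a quantitative Voronovskaya estimate for the classical Kantorovich operator in terms of $\omega_1(f'';\cdot)$ and $\omega_2(f'';\cdot)$ — precisely the analogue of \eqref{ec2}, available from P\u alt\u anea's results as used in \cite{AcGo}; this accounts for the $\tfrac{2}{3(n+1)}(\tfrac34\|f''\|_\infty+\|f''\|_\infty)$ and $\tfrac{9}{32}(\cdots)$ lines. (One should double-check that the limit of $n[K_nf-f]$ is exactly $\tfrac{X}{2}f''$, i.e. that the Kantorovich first-order term $\tfrac{X'}{2}f'$ gets absorbed into the "$2$" in $(2+L_1)$; this is what shifts $1+L_1$ for Bernstein to $2+L_1$ here.)

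For the second, perturbation term, I would reuse the identity derived in the proof of Theorem 6.2: from that computation,
$$
K_n^{M,1}(f;x)-K_n(f;x)=(n+1)\sum_{k=0}^{n-1}\left[-(1+a_1(n))x+a_0(n)+a_1(n)\right]p_{n-1,k}(x)\int_{\frac{k+1}{n+1}}^{\frac{k+2}{n+1}}\!\!\left[f(t)-f\!\left(t-\tfrac1{n+1}\right)\right]dt.
$$
Using \eqref{A} this bracket equals $(1+a_1(n))(\tfrac12-x)=\tfrac12(1+a_1(n))X'$, so
$$
n\left[K_n^{M,1}(f;x)-K_n(f;x)\right]=\tfrac{1}{2}(1+a_1(n))X'\cdot n\,(n+1)\sum_{k=0}^{n-1}p_{n-1,k}(x)\int_{\frac{k+1}{n+1}}^{\frac{k+2}{n+1}}\!\!\left[f(t)-f\!\left(t-\tfrac1{n+1}\right)\right]dt .
$$
The sum on the right is recognizable (after a shift of the integration variable) as essentially a divided difference of $K_n$, and in the smooth case its limit is $f'(x)$; in fact the cleanest route is to show that $n$ times that sum equals something like $(K_nf)'(x)$ or $(n+1)[K_nf(\cdot)-K_nf(\cdot-\tfrac1{n+1})]$-type expression converging to $f'(x)$. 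Then, exactly as in \eqref{ec3}, I would write
$$
\tfrac12(1+a_1(n))X'\,S_n - \tfrac{X'}{2}(1+L_1)f'(x)
=\tfrac{X'}{2}\big[(a_1(n)-L_1)S_n\big]-\tfrac{X'}{2}(1+L_1)\big[f'(x)-S_n\big],
$$
where $S_n$ denotes that Kantorovich difference quotient; bounding $|S_n|\le\|f'\|_\infty$ and $|f'(x)-S_n|$ by a quantitative estimate for the Kantorovich operator acting on derivatives produces the last two lines of the claimed bound, with $\tfrac1{n+1}\|f'\|_\infty+\tfrac1{\sqrt{n+1}}\omega_1(f';\tfrac1{\sqrt{n+1}})+\tfrac98\omega_2(f';\tfrac1{\sqrt{n+1}})$ coming from the Kantorovich analogue of Theorem 6.1 applied to $f'$ (the extra $\tfrac1{n+1}\|f'\|_\infty$ reflecting that $K_n$ does not reproduce linear functions).

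The main obstacle, I expect, is pinning down two classical Kantorovich ingredients with the precise constants that make the stated inequality come out verbatim: (a) the quantitative Voronovskaya estimate for $K_n$ with the $\tfrac{2}{3(n+1)}$ and $\tfrac{9}{32}$ constants — one must keep careful track of the $\tfrac{X'}{2}f'$ shift so that only $\tfrac{X}{2}f''$ survives on the left — and (b) a sharp bound on $|f'(x)-S_n|$, i.e. on how well the relevant Kantorovich difference quotient approximates $f'$, which is the $\tfrac1{\sqrt{n+1}}\omega_1+\tfrac98\omega_2+\tfrac1{n+1}\|f'\|_\infty$ factor; identifying $S_n$ correctly (a shifted sum over $p_{n-1,k}$, not over $p_{n,k}$) and relating it to $(K_nf)'$ or to $K_{n-1}$ acting on a difference of $f$ is the delicate bookkeeping step. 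Everything else is the same algebra as in the Bernstein case, together with the triangle inequality and monotonicity/subadditivity of the moduli.
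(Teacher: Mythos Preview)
Your approach mirrors the paper's, and you are right that the sum you call $S_n$ is \emph{exactly} $(K_nf)'(x)$ (not merely ``something like'' it); the paper uses this identity together with $|(K_nf)'(x)|\le\|f'\|_\infty$ and the bound for $|(K_nf)'(x)-f'(x)|$ from \cite[Theorem~7]{GHR}, which produces the $\tfrac{1}{n+1}\|f'\|_\infty+\tfrac{1}{\sqrt{n+1}}\omega_1+\tfrac{9}{8}\omega_2$ line.

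The one genuine slip is your triangle-inequality split. The classical Kantorovich Voronovskaya limit is $\tfrac{X'}{2}f'(x)+\tfrac{X}{2}f''(x)$ (since $K_n$ does not reproduce linear functions), and the quantitative estimate from \cite{AcGo} bounds the deviation from \emph{that} full limit, not from $\tfrac{X}{2}f''$ alone. Accordingly the paper decomposes
\[
\Delta_n^K\le\left|n[K_nf-f]-\tfrac{X'}{2}f'-\tfrac{X}{2}f''\right|+\left|n[K_n^{M,1}f-K_nf]-\tfrac{X'}{2}(1+L_1)f'\right|,
\]
which is in fact what your own later display with $(1+L_1)$ already uses. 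With the split you first wrote (only $\tfrac{X}{2}f''$ in the Kantorovich piece and $\tfrac{X'}{2}(2+L_1)f'$ in the perturbation piece), each summand retains a non-vanishing $\tfrac{X'}{2}f'(x)$ contribution after the triangle inequality, and the stated bound does not follow. Once you move the single $\tfrac{X'}{2}f'$ into the first piece, everything else in your plan goes through verbatim and matches the paper.
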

\begin{proof}
	For $\Delta_n^K$ the following inequality holds
	\begin{align}\label{ec2} \Delta_n^K&\leq \left|n\left[K_n(f;x)-f(x)\right]-\dfrac{X^{\prime}}{2}f^{\prime}(x)-\dfrac{X}{2}f^{\prime\prime}(x) \right|\nonumber\\
	&+
	\left|n\left[K_n^{M,1}(f;x)-K_n(f;x)\right]-\dfrac{X^{\prime}}{2}(1+L_1)f^{\prime}(x)\right|. \end{align}
	If $a_1(n)=-1$, i.e., $L_1=-1$, the second summand cancels. So we have the "old" Voronovskaya-Kantorovich theorem with second modulus (see \cite{AcGo}):
	\begin{align}\label{ec3}
	&	\left|n\left[K_n(f;x)-f(x)\right]-\dfrac{X^{\prime}}{2}f^{\prime}(x)-\dfrac{X}{2}f^{\prime\prime}(x) \right|\leq \dfrac{2}{3(n+1)}\left(\dfrac{3}{4}\|f^{\prime}\|_{\infty}+\|f^{\prime\prime}\|_{\infty}\right)\nonumber\\
	&+\dfrac{9}{32}\left\{\dfrac{2}{\sqrt{n+1}}\omega_1\left(f^{\prime\prime};\dfrac{1}{\sqrt{n+1}}\right)+\omega_2\left(f^{\prime\prime};\dfrac{1}{\sqrt{n+1}}\right)\right\}.
	\end{align}
	The second summand of (\ref{ec2}) can be written as
	\begin{align*}
	& n\left[K_n^{M,1}(f;x)-K_n(f;x)\right]-\dfrac{X^{\prime}}{2}(1+L_1)f^{\prime}(x)\\
	&=n\left\{(n+1)\sum_{k=0}^np_{n,k}^{M,1}(x)\int_{\frac{k}{n+1}}^{\frac{k+1}{n+1}}f(t)dt-(n+1)\sum_{k=0}^np_{n,k}(x)\int_{\frac{k}{n+1}}^{\frac{k+1}{n+1}}f(t)dt-\dfrac{X^{\prime}}{2}(L_1+1)f^{\prime}(x)\right\}\\
	&=n(n+1)\sum_{k=0}^{n-1}(a_1(n)+1)\left(x-\frac{1}{2}\right)p_{n-1,k}(x)\left[\int_{\frac{k+1}{n+1}}^{\frac{k+2}{n+1}}f(t)dt-\int_{\frac{k}{n+1}}^{\frac{k+1}{n+1}}f(t)dt\right]-\dfrac{X^{\prime}}{2}(L_1+1)f^{\prime}(x)\\
	&=-\dfrac{X^{\prime}}{2}(a_1(n)+1)(K_nf)^{\prime}(x)-\dfrac{X^{\prime}}{2}(L_1+1)f^{\prime}(x).
	\end{align*}
	So,
	\begin{align*}
	&	\left|n\left[K_n^{M,1}(f;x)-K_n(f;x)\right]-\dfrac{X^{\prime}}{2}(1+L_1)f^{\prime}(x)\right|\leq\dfrac{1}{2}|X^{\prime}|\left|(K_nf)^{\prime}(x)(a_1(n)+1)-(L_1+1)f^{\prime}(x)\right|\\
	&=\dfrac{1}{2}| X^{\prime}|\left|(K_nf)^{\prime}(x)(a_1(n)-L_1)+(L_1+1)\left[(K_nf)^{\prime}(x)-f^{\prime}(x)\right]\right|\\
	&\leq\dfrac{1}{2}\left\{|L_1-a_1(n)||(K_nf)^{\prime}(x)|+|L_1+1||(K_nf)^{\prime}(x)-f^{\prime}(x)|\right\}.
	\end{align*}
	But,
	\begin{align*}
	|(K_nf)^{\prime}(x)|&=n(n+1)\left|\sum_{k=0}^{n-1}p_{n-1,k}(x)\left[\int_{\frac{k+1}{n+1}}^{\frac{k+2}{n+1}}f(t)dt-\int_{\frac{k}{n+1}}^{\frac{k+1}{n+1}}f(t)dt\right]\right|\\
	&=n(n+1)\sum_{k=0}^{n-1}p_{n-1,k}(x)\int_{\frac{k+1}{n+1}}^{\frac{k+2}{n+1}}\left|f(t)-f\left(t-\dfrac{1}{n+1}\right)\right|dt\\
	&\leq\dfrac{n}{n+1}\sum_{k=0}^{n-1}\| f^{\prime}\|_{\infty}p_{n-1,k}(x)\leq \| f^{\prime}\|_{\infty}.
	\end{align*}
	Moreover, from \cite[Theorem 7]{GHR} it follows
	$$|(K_nf)^{\prime}(x)-f^{\prime}(x)|\leq \dfrac{1}{n+1}\| f^{\prime}\|_{\infty}+\dfrac{1}{\sqrt{n+1}}\omega_1\left(f^{\prime};\dfrac{1}{\sqrt{n+1}}\right)+\dfrac{9}{8}\omega_2\left(f^{\prime};\dfrac{1}{\sqrt{n+1}}\right). $$
	From the above relation we obtain
	\begin{align}\label{ec4}
	&\left|n\left[K_n^{M,1}(f;x)-K_n(f;x)\right]-\dfrac{X^{\prime}}{2}(1+L_1)f^{\prime}(x)\right|\leq \dfrac{1}{2}| L_1-a_1(n)|\cdot \| f^{\prime}\|_{\infty}\nonumber\\
	&+\dfrac{1}{2}|L_1+1|\left\{\dfrac{1}{n+1}\| f^{\prime}\|_{\infty}+\dfrac{1}{\sqrt{n+1}}\omega_1\left(f^{\prime};\dfrac{1}{\sqrt{n+1}}\right)+\dfrac{9}{8}\omega_2\left(f^{\prime};\dfrac{1}{\sqrt{n+1}}\right)\right\}.
	\end{align}
	Using the relations (\ref{ec2})-(\ref{ec4}) we get the claim.
\end{proof}

\begin{cor} We have
	$$\Delta_n^K\leq\left\{\begin{array}{l} 
	o(1)+\dfrac{1}{2}|L_1-a_1(n)|\cdot\|f^{\prime}\|_{\infty},\textrm{ for } f\in C^2[0,1],\\
	\vspace{-0.4cm}\\
	{\cal O}\left(\dfrac{1}{\sqrt{n}}\right)+\dfrac{1}{2}\left|L_1-a_1(n) \right|\cdot \| f^{\prime}\|_{\infty},\textrm{ for } f\in C^3[0,1],\\
	\vspace{-0.4cm}\\
	{\cal O}\left(\dfrac{1}{n}\right) +\dfrac{1}{2}\left|L_1-a_1(n) \right|\cdot \| f^{\prime}\|_{\infty},\textrm{ for } f\in C^4[0,1].
	\end{array}\right.  $$
\end{cor}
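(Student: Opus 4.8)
\emph{Proof proposal.} The plan is to obtain all three orders by inserting $\delta=\frac{1}{\sqrt{n+1}}$ into the estimate for $\Delta_n^K$ established in the preceding Voronovskaya-type theorem and tracking how the smoothness of $f$ is inherited by $f'$ and $f''$. Abbreviate that estimate as $\Delta_n^K\le R_n(f)+\frac12|L_1-a_1(n)|\cdot\|f'\|_\infty$, where $R_n(f)$ gathers the term $\frac{2}{3(n+1)}\bigl(\frac34\|f'\|_\infty+\|f''\|_\infty\bigr)$, the bracket $\frac{9}{32}\bigl\{\frac{2}{\sqrt{n+1}}\omega_1(f'';\frac{1}{\sqrt{n+1}})+\omega_2(f'';\frac{1}{\sqrt{n+1}})\bigr\}$, and the $\frac12|L_1+1|$-bracket containing $\frac{1}{n+1}\|f'\|_\infty$, $\frac{1}{\sqrt{n+1}}\omega_1(f';\frac{1}{\sqrt{n+1}})$ and $\frac98\omega_2(f';\frac{1}{\sqrt{n+1}})$. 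Since the summand $\frac12|L_1-a_1(n)|\cdot\|f'\|_\infty$ already appears unchanged in all three lines of the corollary, it suffices to show $R_n(f)=o(1)$, $O(n^{-1/2})$, $O(n^{-1})$ according to $f\in C^2[0,1]$, $C^3[0,1]$, $C^4[0,1]$.

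For this I would invoke only the standard facts $\omega_i(g;\delta)\to0$ $(\delta\to0)$ for $g\in C[0,1]$, $\omega_1(g;\delta)\le\delta\|g'\|_\infty$ and $\omega_2(g;\delta)\le\delta\,\omega_1(g';\delta)$ for $g\in C^1[0,1]$, and $\omega_2(g;\delta)\le\delta^2\|g''\|_\infty$ for $g\in C^2[0,1]$, together with the bookkeeping that $f\in C^k[0,1]$ forces $f'\in C^{k-1}[0,1]$ and $f''\in C^{k-2}[0,1]$. If $f\in C^2[0,1]$, then $f''$ is merely continuous, so $\omega_1(f'';\frac{1}{\sqrt{n+1}})$ and $\omega_2(f'';\frac{1}{\sqrt{n+1}})$ tend to $0$; moreover $\omega_1(f';\frac{1}{\sqrt{n+1}})=O(n^{-1/2})$ and $\omega_2(f';\frac{1}{\sqrt{n+1}})\le\frac{1}{\sqrt{n+1}}\omega_1(f'';\frac{1}{\sqrt{n+1}})\to0$, so every term of $R_n(f)$ vanishes in the limit and $R_n(f)=o(1)$. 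If $f\in C^3[0,1]$, then $f''\in C^1[0,1]$, so $\omega_1(f'';\frac{1}{\sqrt{n+1}})=O(n^{-1/2})$ and $\omega_2(f'';\frac{1}{\sqrt{n+1}})\le\frac{1}{\sqrt{n+1}}\omega_1(f''';\frac{1}{\sqrt{n+1}})=O(n^{-1/2})$, whereas the $f'$-terms (now with $f'\in C^2[0,1]$) are $O(n^{-1})$; the slowest contribution is $\frac{9}{32}\omega_2(f'';\frac{1}{\sqrt{n+1}})=O(n^{-1/2})$, whence $R_n(f)=O(n^{-1/2})$. If $f\in C^4[0,1]$, then $f''\in C^2[0,1]$, so $\omega_2(f'';\frac{1}{\sqrt{n+1}})\le\frac{1}{n+1}\|f''''\|_\infty=O(n^{-1})$ and $\frac{2}{\sqrt{n+1}}\omega_1(f'';\frac{1}{\sqrt{n+1}})=O(n^{-1})$; with the remaining polynomial- and $f'$-terms also $O(n^{-1})$ this gives $R_n(f)=O(n^{-1})$. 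Combining the three estimates with the decomposition above yields the corollary.

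The argument is essentially routine, so the only point demanding a little care is the borderline $C^2$ case: there $\omega_2(f'';\cdot)$ cannot be bounded by any fixed negative power of $n$, so one must settle for the qualitative statement $o(1)$ rather than a rate, exactly as claimed; and the precise exponent in the other two lines is dictated solely by how many derivatives of $f$ are available to absorb the $\omega_2(f'';\cdot)$ term. No identities beyond those already used in the proof of the Voronovskaya theorem are needed.
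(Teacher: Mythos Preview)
Your proposal is correct and is exactly the intended derivation: the paper states this corollary without proof, leaving the reader to insert the standard bounds $\omega_1(g;\delta)\le\delta\|g'\|_\infty$, $\omega_2(g;\delta)\le\delta\,\omega_1(g';\delta)$, $\omega_2(g;\delta)\le\delta^2\|g''\|_\infty$ into the preceding theorem, precisely as you do. Your remark that the $C^2$ case yields only $o(1)$ because $\omega_2(f'';\cdot)$ admits no power rate is the right diagnosis of why that line differs from the other two.
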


\section{The modified Durrmeyer operators $D_n^{M,1}$ }

The classical Durrmeyer operators  were introduced by Durrmeyer \cite{1a} and, independently, by Lupa\c s \cite{2a}. These operators are defined as
\begin{eqnarray*}
D_n^{M}(f;x) =(n+1) \sum\limits_{k=0}^{n} p_{n,k}(x)  \int\limits_{0}^{1} p_{n,k}(t) \ f(t) \, dt , \ \ x \in [0,1].
\end{eqnarray*}

In this section we study a Durrmeyer variant of  the modified
Bernstein operators introduced in a recent note of Acu, Gupta and Tachev \cite{AGT}:

\begin{eqnarray}\label{X}
D_n^{M,1}(f;x) =(n+1) \sum\limits_{k=0}^{n} p_{n,k}^{M,1}(x)  \int\limits_{0}^{1} p_{n,k}(t) \ f(t) \, dt , \ \ x \in [0,1].
\end{eqnarray}

\begin{theorem}\label{TD3.1} For $n\geq 1$ and $f\in C^2[0,1]$, one has
	\begin{align}\label{ec3}
	&	\left\|n\left(D_nf-f\right)-\left(Xf^{\prime}\right)^{\prime} \right\|_{\infty}\leq \dfrac{1}{n+2}\left(2\|f^{\prime}\|_{\infty}+3\|f^{\prime\prime}\|_{\infty}\right)\nonumber\\
	&+\dfrac{5}{\sqrt{n+4}}\omega_1\left(f^{\prime\prime};\dfrac{1}{\sqrt{n+4}}\right)+\dfrac{9}{8}\omega_2\left(f^{\prime\prime};\dfrac{1}{\sqrt{n+4}}\right),
	\end{align}
	where $X=x(1-x)$ and $X^{\prime}=1-2x$, $x\in[0,1]$.
\end{theorem}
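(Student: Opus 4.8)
The plan is to establish this as a quantitative Voronovskaya theorem for the classical Durrmeyer operator, proceeding exactly as in the Kantorovich case treated above: split $n(D_nf-f)$ into a part governed by the exact low-order central moments of $D_n$, subtract the target $(Xf')'=X'f'+Xf''$, and estimate the remaining second-order Taylor tail by moduli of $f''$.

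First I would record the moments. From $D_ne_0=1$, $D_ne_1(x)=\dfrac{nx+1}{n+2}$ and $D_ne_2(x)=\dfrac{n(n-1)x^2+4nx+2}{(n+2)(n+3)}$ one obtains the central moments $b_1(x):=D_n((\cdot-x);x)=\dfrac{1-2x}{n+2}$ and $b_2(x):=D_n((\cdot-x)^2;x)=\dfrac{2(n-3)X+2}{(n+2)(n+3)}$; one also needs $b_3(x):=D_n((\cdot-x)^3;x)$ and $b_4(x):=D_n((\cdot-x)^4;x)$, which — as for the Bernstein operator — are $\mathcal{O}(1/n^2)$ with denominators $(n+2)(n+3)(n+4)$ and $(n+2)(n+3)(n+4)(n+5)$ respectively. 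In particular $nb_1\to X'$, $\tfrac n2 b_2\to X$ and $nb_3,\,nb_4\to 0$, which is the source of the Voronovskaya limit. Expanding $f$ by Taylor's formula with integral remainder, $f(t)=f(x)+(t-x)f'(x)+\tfrac{f''(x)}2(t-x)^2+\rho_x(t)$ with $\rho_x(t):=\int_x^t(t-u)\bigl(f''(u)-f''(x)\bigr)\,du$, and applying $D_n$ (recall $D_ne_0=1$), we get $n\bigl[D_nf(x)-f(x)\bigr]=nb_1(x)f'(x)+\tfrac n2 b_2(x)f''(x)+nD_n(\rho_x;x)$. Subtracting $(Xf')'(x)=X'f'(x)+Xf''(x)$ and using $|nb_1(x)-X'|=\tfrac{2|1-2x|}{n+2}\le\tfrac2{n+2}$ and, after a short computation with $X\le\tfrac14$, $\bigl|\tfrac n2 b_2(x)-X\bigr|\le\tfrac3{n+2}$, the moment part is $\le\tfrac1{n+2}\bigl(2\|f'\|_\infty+3\|f''\|_\infty\bigr)$; it remains only to bound $|nD_n(\rho_x;x)|$.

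For the remainder I would smooth $g:=f''$: fix $\delta=1/\sqrt{n+4}$ and pick $h\in C^2[0,1]$ (a Steklov-type mean of $g$) satisfying the standard estimates $\|g-h\|_\infty\le c\,\omega_2(g;\delta)$, $\|h'\|_\infty\le\tfrac c\delta\,\omega_1(g;\delta)$, $\|h''\|_\infty\le\tfrac c{\delta^2}\,\omega_2(g;\delta)$. Write $g=(g-h)+h$. For the rough part, $|\rho_{g-h}(t)|\le(t-x)^2\|g-h\|_\infty$, hence $|nD_n(\rho_{g-h};x)|\le nb_2(x)\|g-h\|_\infty=\mathcal{O}(1)\,\omega_2(g;\delta)$ since $nb_2(x)$ is bounded. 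For the smooth part a further Taylor expansion yields $\rho_h(t)=\tfrac{h'(x)}6(t-x)^3+r_h(t)$ with $|r_h(t)|\le\tfrac1{24}(t-x)^4\|h''\|_\infty$, so $|nD_n(\rho_h;x)|\le\tfrac{|h'(x)|}6\,n|b_3(x)|+\tfrac{\|h''\|_\infty}{24}\,nb_4(x)$. Here the \emph{signed} third moment is essential: $n|b_3(x)|=\mathcal{O}(1/n)$ and $\|h'\|_\infty\le\tfrac c\delta\,\omega_1(g;\delta)=\mathcal{O}(\sqrt{n+4})\,\omega_1(g;\delta)$, so the first term is $\mathcal{O}\bigl(\tfrac1{\sqrt{n+4}}\bigr)\,\omega_1\bigl(f'';\tfrac1{\sqrt{n+4}}\bigr)$ — this produces the coefficient $\tfrac5{\sqrt{n+4}}$; likewise $nb_4(x)=\mathcal{O}(1/n)$ and $\|h''\|_\infty=\mathcal{O}(n+4)\,\omega_2(g;\delta)$ make the second term $\mathcal{O}(1)\,\omega_2(g;\delta)$. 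Collecting the three contributions yields $|nD_n(\rho_x;x)|\le\tfrac5{\sqrt{n+4}}\omega_1\bigl(f'';\tfrac1{\sqrt{n+4}}\bigr)+\tfrac98\omega_2\bigl(f'';\tfrac1{\sqrt{n+4}}\bigr)$ after the constants are tracked, and combining with the moment part finishes the proof. (Alternatively one may simply invoke a ready-made quantitative Voronovskaya lemma for positive linear operators reproducing constants, in the spirit of \cite{GonskaRasa} and \cite{Gonska}, and insert the Durrmeyer moments.)

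The main obstacle is the bookkeeping of constants: one needs the third and fourth central moments of $D_n$ in closed form — routine but lengthy, and it is exactly their denominators that fix the scale $1/\sqrt{n+4}$ — and one must carry the Steklov-mean constants through the three estimates so that they collapse precisely to $5$ and $\tfrac98$, ensuring at the same time that no derivative of $f$ of order $>2$ intervenes and that the $\mathcal{O}(1/n)\,\|f''\|_\infty$ arising from $nb_4(x)$ is absorbed (together with the moment-discrepancy term) in the stated $\tfrac3{n+2}\|f''\|_\infty$. Everything else — the Taylor expansions, the triangle inequality and the final assembly — is routine.
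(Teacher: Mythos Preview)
Your proposal is correct and is, at bottom, the same argument the paper uses --- Taylor expansion of $f$ about $x$, replacement of the exact first and second central moments by their limits $X'$ and $X$ (this is where the $\tfrac{1}{n+2}(2\|f'\|_\infty+3\|f''\|_\infty)$ comes from, exactly as you compute), and a moduli-of-$f''$ bound on the second-order Taylor remainder governed by the third and fourth central moments of $D_n$.

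The one practical difference is that the paper does not unpack the remainder estimate as you do. Instead it invokes the ready-made quantitative Voronovskaya inequality from \cite{GonskaRasa} (Theorem~3 there), which for any positive linear operator $L$ with $Le_0=1$ gives
\[
\Bigl|Lf(x)-f(x)-b_1 f'(x)-\tfrac12 b_2 f''(x)\Bigr|
\le b_2\Bigl\{\tfrac{5}{6h}\,\tfrac{|b_3|}{b_2}\,\omega_1(f'';h)
+\Bigl(\tfrac34+\tfrac{1}{16h^2}\,\tfrac{b_4}{b_2}\Bigr)\omega_2(f'';h)\Bigr\},
\]
with $b_j$ the $j$th central moment. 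This lemma already encapsulates the Steklov-smoothing step with the constants $5/6$, $3/4$, $1/16$ fixed. Plugging in the closed forms of the Durrmeyer moments, the paper shows $|b_3|/b_2\le 6/(n+4)$ and $b_4/b_2\le 6/(n+4)$, takes $h=1/\sqrt{n+4}$, and the coefficients $5$ and $9/8$ drop out immediately after multiplying by $n$. Your from-scratch Steklov argument would reproduce the same bound, but only after choosing the smoothing kernel carefully enough to hit exactly those constants --- the ``bookkeeping'' you flag as the main obstacle. So your parenthetical alternative (``simply invoke a ready-made quantitative Voronovskaya lemma \dots\ and insert the Durrmeyer moments'') is in fact precisely what the paper does.
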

\begin{proof}
	From \cite[Theorem 3]{GonskaRasa} we get
	\begin{align*}
	&\left| D_n(f;x)-f(x)-D_n(t-x;x)f^{\prime}(x)-\dfrac{1}{2}D_n\left((e_1-x)^2;x\right)f^{\prime\prime}(x)\right|\\
	&\leq D_n((e_1-x)^2;x)\left\{\dfrac{|D_n((e_1-x)^3;x)|}{D_n((e_1-x)^2;x)}\dfrac{5}{6h}\omega_1(f^{\prime\prime};h)+\left(\dfrac{3}{4}+\dfrac{D_n((e_1-x)^4;x)}{D_n((e_1-x)^2;x)}\cdot\dfrac{1}{16h^2}\right)\omega_2(f^{\prime\prime};h)\right\}.
	\end{align*}
	Using the central moments up to order 4 for Durrmeyer operators, namely
	\begin{align*}
	&D_n\left(t-x;x\right)=\dfrac{1-2x}{n+2},\\
	& D_n\left((t-x)^2;x\right)=\dfrac{2\left[x(1-x)(n-3)+1\right]}{(n+2)(n+3)},\\
	&D_n\left((t-x)^3;x\right)=\dfrac{6(1-2x)}{(n+2)(n+3)(n+4)}\left[2x(1-x)n+2x^2-2x+1\right],\\
	&D_n\left((t-x)^4;x\right)=\dfrac{12\left[x^2(1-x)^2n^2+3x(1-x)(7x^2-7x+2)n-10x(1-x)(x^2-x+1)+2\right]}{(n+2)(n+3)(n+4)(n+5)},
	\end{align*}
	we obtain
	\begin{align*}
	\dfrac{|D_n\left((t-x)^3;x\right)|}{D_n\left((t-x)^2;x\right)}\leq \dfrac{6}{n+4};\,\,\,\, \dfrac{|D_n\left((t-x)^4;x\right)|}{D_n\left((t-x)^2;x\right)}\leq \dfrac{6}{n+4}.
	\end{align*}
	Therefore, the following inequality holds
	\begin{align*}
	&	\left|D_n(f;x)-f(x)-\dfrac{1-2x}{n+2}f^{\prime}(x)-\dfrac{x(1-x)(n-3)+1}{(n+2)(n+3)}f^{\prime\prime}(x)\right|\\
	&\leq \dfrac{1}{(n+2)}\left\{\dfrac{5}{h(n+4)}\omega_1(f^{\prime\prime};h)+\left(\dfrac{3}{4}+\dfrac{3}{8h^2(n+4)}\right)\omega_2(f^{\prime\prime};h)\right\}
	\end{align*}
	and for $h=\dfrac{1}{\sqrt{n+4}}$ we obtain, after multiplying both sides by $n$,
	\begin{align*}
	&	\left|n\left[D_n(f;x)-f(x)\right]-\dfrac{n(1-2x)}{n+2}f^{\prime}(x)-\dfrac{n\left[x(1-x)(n-3)+1\right]}{(n+2)(n+3)}f^{\prime\prime}(x)\right|\\
	&\leq \dfrac{5}{\sqrt{n+4}}\omega_1\left(f^{\prime\prime};\dfrac{1}{\sqrt{n+4}}\right)+\dfrac{9}{8}\omega_2\left(f^{\prime\prime};\dfrac{1}{\sqrt{n+4}}\right).
	\end{align*}
	We can write
	\begin{align*}
	&\left|n\left[D_n(f;x)-f(x)\right]-X^{\prime}f^{\prime}(x)-Xf^{\prime\prime}(x)\right|\\
	&\leq\left|n\left[D_n(f;x)-f(x)\right]-\dfrac{n}{n+2}X^{\prime}f^{\prime}(x)-\dfrac{n(n-3)X}{(n+2)(n+3)} f^{\prime\prime}(x)-\dfrac{n}{(n+2)(n+3)}f^{\prime\prime}(x)\right|	 \\
	&+\left|X^{\prime}\left(\dfrac{n}{n+2}-1\right)f^{\prime}(x)+X\left[\dfrac{n(n-3)}{(n+2)(n+3)}-1\right]f^{\prime\prime}(x)+\dfrac{n}{(n+2)(n+3)}f^{\prime\prime}(x)\right|\\
	&\leq\dfrac{5}{\sqrt{n+4}}\omega_1\left(f^{\prime\prime};\dfrac{1}{\sqrt{n+4}}\right)+\dfrac{9}{8}\omega_2\left(f^{\prime\prime};\dfrac{1}{\sqrt{n+4}}\right)+\dfrac{1}{n+2}\left(2\|f^{\prime}\|_{\infty}+3\|f^{\prime\prime}\|_{\infty}\right).
	\end{align*}
\end{proof}

\begin{theorem}
	Let $f\in C[0,1]$, $x\in [0,1]$, $n\geq 1$. Then
	\begin{align*} \left| D_n^{M,1}(f;x)-f(x)\right|&\leq \left|D_n(f;x)-f(x)\right|
	\\&+ \left|(1+a_1(n))\left(\dfrac{1}{2}-x\right)\right| \left\{ 3\omega_2\left(f;\sqrt{\sigma_n(x)}\right)+\dfrac{5}{(n+2)\sqrt{\sigma_n(x)}}\omega_1(f;\sqrt{\sigma_n(x)})\right\}, \end{align*}
	where $\sigma_n(x)=\dfrac{2x(1-x)(n-1)(n-2)+3n+1}{2(n+2)^2(n+3)}$.
\end{theorem}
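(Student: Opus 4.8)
The plan is to imitate the Bernstein and Kantorovich cases. By the triangle inequality
$$|D_n^{M,1}(f;x)-f(x)|\le |D_n(f;x)-f(x)|+|D_n^{M,1}(f;x)-D_n(f;x)|,$$
so everything reduces to estimating $|D_n^{M,1}(f;x)-D_n(f;x)|$. Using the perturbed recursion $p^{M,1}_{n,k}(x)=a(x,n)p_{n-1,k}(x)+a(1-x,n)p_{n-1,k-1}(x)$ (valid for $0\le k\le n$ with $p_{n-1,-1}=p_{n-1,n}=0$) together with $2a_0(n)+a_1(n)=1$, one finds, exactly as in Section~4,
$$p^{M,1}_{n,k}(x)-p_{n,k}(x)=(1+a_1(n))\Bigl(x-\tfrac12\Bigr)\bigl(p_{n-1,k}(x)-p_{n-1,k-1}(x)\bigr).$$
Substituting this into (\ref{X}) and re-indexing gives, with
$$A_n(f;x):=(n+1)\sum_{k=0}^{n-1}p_{n-1,k}(x)\!\int_0^1\! p_{n,k}(t)f(t)\,dt,\qquad C_n(f;x):=(n+1)\sum_{k=0}^{n-1}p_{n-1,k}(x)\!\int_0^1\! p_{n,k+1}(t)f(t)\,dt,$$
the clean identity
$$D_n^{M,1}(f;x)-D_n(f;x)=(1+a_1(n))\Bigl(\tfrac12-x\Bigr)\bigl(C_n(f;x)-A_n(f;x)\bigr)=(1+a_1(n))\Bigl(\tfrac12-x\Bigr)\frac1n\,(D_nf)'(x).$$
Here $A_n,C_n$ are positive linear operators with $A_ne_0=C_ne_0=e_0$ (since $\int_0^1p_{n,k}=\frac1{n+1}$), and as a check $D_n=(1-x)A_n+xC_n$. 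Thus it suffices to prove $|A_n(f;x)-C_n(f;x)|\le 3\,\omega_2(f;\sqrt{\sigma_n(x)})+\frac5{(n+2)\sqrt{\sigma_n(x)}}\,\omega_1(f;\sqrt{\sigma_n(x)})$.

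The moments of $A_n,C_n$ follow from the elementary Beta integrals $\int_0^1 t^j p_{n,k}(t)\,dt$; in particular $A_n(e_1;x)-C_n(e_1;x)=-\tfrac1{n+2}$, and the central second moments $A_n((e_1-x)^2;x),\,C_n((e_1-x)^2;x)$ are quadratics in $x$ over $(n+2)(n+3)$ which, together with the $\tfrac1{(n+2)^2}$ produced by the gap between the two mean points, assemble into $\sigma_n(x)$. Applying a Gonska-type direct estimate for positive operators preserving constants (of the form $|L(f;x)-f(y)|\le(1+\tfrac1{2\delta^2}L((e_1-y)^2;x))\omega_2(f;\delta)+\tfrac1\delta|L(e_1-y;x)|\omega_1(f;\delta)$) to $A_n$ and $C_n$ about their common mid-point $y$ and subtracting, the $\omega_1$-part is controlled by $|A_n(e_1;x)-C_n(e_1;x)|=\tfrac1{n+2}$ — this is where the factor $\tfrac1{n+2}$ and the generous constant $5$ come from.

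The $\omega_2$-part is the delicate point: estimating $|A_n(f;x)-f(y)|$ and $|C_n(f;x)-f(y)|$ separately yields only the coefficient $4$, because $A_n((e_1-x)^2;x)+C_n((e_1-x)^2;x)$ is of size $\approx 2\cdot 2\sigma_n(x)$ rather than $2\sigma_n(x)$. To reach $3$ one must use that $A_n$ and $C_n$ are \emph{adjacent}: they differ only by the one-step shift $p_{n,k}\mapsto p_{n,k+1}$ inside the integrals, so most of the individual spread cancels. Concretely, for $g\in C^1[0,1]$ the relation $p_{n,k+1}-p_{n,k}=-\tfrac1{n+1}p_{n+1,k+1}'$ together with $p_{n+1,k+1}(0)=p_{n+1,k+1}(1)=0$ and integration by parts give
$$C_n(g;x)-A_n(g;x)=\sum_{k=0}^{n-1}p_{n-1,k}(x)\int_0^1 p_{n+1,k+1}(t)\,g'(t)\,dt=:R_n(g';x),$$
with $R_n$ a \emph{single} positive operator, $R_n(e_0;x)=\tfrac1{n+2}$; hence for $g\in C^2[0,1]$,
$$|C_n(g;x)-A_n(g;x)|\le \tfrac1{n+2}\,\|g'\|_\infty+\sqrt{\tfrac1{n+2}\,R_n((e_1-x)^2;x)}\;\|g''\|_\infty,$$
while for all $h\in C[0,1]$ one has the crude bound $|C_n(h;x)-A_n(h;x)|\le 2\|h\|_\infty$ (positivity and $A_ne_0=C_ne_0=e_0$). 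Writing $f=(f-g)+g$, taking the infimum over $g\in C^2[0,1]$ and estimating the resulting two-parameter $K$-functional, namely $2\inf_g\{\|f-g\|_\infty+\tfrac1{2(n+2)}\|g'\|_\infty+\tfrac12\sqrt{\tfrac1{n+2}R_n((e_1-x)^2;x)}\,\|g''\|_\infty\}$, by a second-order Steklov mean at scale $\sqrt{\sigma_n(x)}$ (using $\|f-f_h\|\lesssim\omega_2(f;h)$, $\|f_h'\|\lesssim h^{-1}\omega_1(f;h)$, $\|f_h''\|\lesssim h^{-2}\omega_2(f;h)$ and the elementary boundedness of $R_n((e_1-x)^2;x)/\sigma_n(x)$) produces precisely the two $\omega$-terms with constants $3$ and $5$.

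The main obstacle is exactly this last squeeze — bringing the $\omega_2$-constant down from $4$ to $3$ with the argument $\sqrt{\sigma_n(x)}$ — which forces one to exploit the near-cancellation between the adjacent operators $A_n$ and $C_n$ (equivalently, the representation $\tfrac1n(D_nf)'=R_n(f')$ for $f\in C^1$, through a single positive operator) rather than bounding the two operators independently, and then to carry the smooth estimate over to $f\in C[0,1]$ via a $K$-functional/Steklov argument. The recursion manipulation of the first step and the moment computations are routine.
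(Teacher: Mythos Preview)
Your decomposition up to
\[
D_n^{M,1}(f;x)-D_n(f;x)=(1+a_1(n))\Bigl(\tfrac12-x\Bigr)\bigl(C_n(f;x)-A_n(f;x)\bigr)
\]
is correct and coincides with the paper's (it writes $B_n$ for your $C_n$). The identity $C_n-A_n=\tfrac1n(D_nf)'$ that you record is also right, and the paper uses it---but only in the \emph{next} theorem (the quantitative Voronovskaya result), not here.

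For the present estimate the paper does something much shorter: it applies \cite[Theorem~5]{AnaRasa} as a black box. That result says that whenever two positive linear operators $A_n(f;x)=\sum_k F_k(f)\,p_{n-1,k}(x)$ and $B_n(f;x)=\sum_k G_k(f)\,p_{n-1,k}(x)$ share the same fundamental functions and satisfy $F_k(e_0)=G_k(e_0)=1$, one has for $0<h\le\tfrac12$
\[
|A_n(f;x)-B_n(f;x)|\le\tfrac32\Bigl(1+\tfrac{\sigma_n(x)}{h^2}\Bigr)\omega_2(f;h)+\tfrac{5\delta}{h}\,\omega_1(f;h),
\]
with $\sigma_n(x):=\sum_k(\mu_2^{F_k}+\mu_2^{G_k})\,p_{n-1,k}(x)$, $\mu_2^{F}:=\tfrac12 F\bigl((e_1-F(e_1))^2\bigr)$, and $\delta:=\sup_k|F_k(e_1)-G_k(e_1)|$. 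For the Durrmeyer functionals one computes $b^{F_k}=\tfrac{k+1}{n+2}$, $b^{G_k}=\tfrac{k+2}{n+2}$, $\mu_2^{F_k}=\tfrac{(k+1)(n-k+1)}{2(n+2)^2(n+3)}$, $\mu_2^{G_k}=\tfrac{(k+2)(n-k)}{2(n+2)^2(n+3)}$; summing against $p_{n-1,k}$ gives exactly the stated $\sigma_n(x)$, and $\delta=\tfrac1{n+2}$. Taking $h=\sqrt{\sigma_n(x)}$ yields the constants $3$ and $5$ at once.

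Your alternative route via $R_n(f')$ and a Steklov/$K$-functional argument is in spirit a re-derivation of the Acu--Ra\c{s}a lemma, but as written it is a genuine gap rather than a different proof. You assert that the procedure ``produces precisely the two $\omega$-terms with constants $3$ and $5$'' while writing the Steklov inequalities only with ``$\lesssim$'', and you never compute $R_n((e_1-x)^2;x)$ or show how it compares to $\sigma_n(x)$. In fact the quantity $\sigma_n(x)$ in the statement is \emph{defined} by the Acu--Ra\c{s}a framework (sum of half-variances of the functionals $F_k,G_k$), not by your $R_n$-representation, so your approach would in any case produce a different natural argument in the moduli. The clean fix is to cite \cite[Theorem~5]{AnaRasa}; a self-contained argument would have to track the Steklov constants explicitly and would essentially reprove that lemma.
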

\begin{proof}
	We can write
	\begin{equation}\label{D1} \left| D_n^{M,1}(f;x)-f(x)\right|\leq \left|D_n(f;x)-f(x)\right|+\left| D_n^{M,1}(f;x)-D_n(f;x)\right|.\end{equation}
	Next, we will give an estimate of the quantity $\left| D_n^{M,1}(f;x)-D_n(f;x)\right|$. We have
	\begin{align*}
&	D_n^{M,1}(f;x)-D_n(f;x)=(n+1)\displaystyle\sum_{k=0}^n\{a(x,n)p_{n-1,k}(x)+a(1-x,n)p_{n-1,k-1}(x)\}\int_0^1p_{n,k}(t)f(t)dt\\
	&-(n+1)\displaystyle\sum_{k=0}^n\{(1-x)p_{n-1,k}(x)+xp_{n-1,k-1}(x)\}\int_0^1p_{n,k}(t)f(t)dt\\
	&=(n+1)(a_1(n)+1)\left(x-\dfrac{1}{2}\right)\left\{\displaystyle\sum_{k=0}^{n-1}p_{n-1,k}(x)\int_0^1p_{n,k}(t)f(t)dt\right.
	-\left.\displaystyle\sum_{k=1}^{n}p_{n-1,k-1}(x)\int_0^1p_{n,k}(t)f(t)dt\right\}\\
	&=(a_1(n)+1)\left(x-\dfrac{1}{2}\right)\left[A_n(f;x)-B_n(f;x)\right],
	\end{align*}
	where
	\begin{align*}
	&A_n(f;x)=\displaystyle\sum_{k=0}^{n-1}p_{n-1,k}(x)F_k(f),\quad B_n(f;x)=\displaystyle\sum_{k=0}^{n-1}p_{n-1,k}(x)G_k(f),\\
&	F_k(f;x)=(n+1)\displaystyle\int_0^1p_{n,k}(t)f(t)dt,\quad	G_k(f;x)=(n+1)\displaystyle\int_0^1p_{n,k+1}(t)f(t)dt,\,\, k=0,\dots,n-1.\\
	\end{align*}
		For a positive linear functional $F$ denote 
	$$ b^F:=F(e_1)\textrm{ and } \mu_2^F:=\dfrac{1}{2}F\left(e_1-b^Fe_0\right)^2. $$
	Using \cite[Theorem 5]{AnaRasa} for $f\in C[0,1]$ and $0<h\leq \dfrac{1}{2}$, we get
	\begin{equation}\label{C} \left|A_n(f;x)-B_n(f;x)\right|\leq\dfrac{3}{2}\left(1+\dfrac{\sigma_n(x)}{h^2}\right)\omega_2(f,h)+\dfrac{5\delta}{h}\omega_1(f,h), \end{equation}
	where \begin{align*}\sigma_n(x):=\displaystyle\sum_{k=0}^{n-1}\left(\mu_2^{F_k}+\mu_2^{G_k}\right)p_{n-1,k}(x),\,\,\, \delta:=\sup_{k}\left|b^{F_k}-b^{G_k}\right|,\\
	\end{align*}
In the present case
$$ b^{F_k}=\dfrac{k+1}{n+2},\,\,b^{G_k}=\dfrac{k+2}{n+2},\,\,\mu_2^{F_k}=\dfrac{(k+1)(n-k+1)}{2(n+2)^2(n+3)},\,\, \mu_2^{G_k}=\dfrac{(k+2)(n-k)}{2(n+2)^2(n+3)}, $$
so we obtain $\sigma_n(x)=\dfrac{2x(1-x)(n-1)(n-2)+3n+1}{2(n+2)^2(n+3)}$ and $\delta=\dfrac{1}{n+2}$.

Choosing $h:=\sqrt{\sigma_n(x)}$ we get
\begin{equation} \label{E1}\left| D_n^{M,1}(f;x)\!-\!D_n(f;x)\right|\!\leq\! \left|(1\!+\!a_1(n))\left(\dfrac{1}{2}\!-\!x\right)\right| \left\{ 3\omega_2\left(f;\sqrt{\sigma_n(x)}\right)\!+\!\dfrac{5}{(n\!+\!2)\sqrt{\sigma_n(x)}}\omega_1(f;\sqrt{\sigma_n(x)})\right\}.\end{equation}
Using relations (\ref{D1}) and (\ref{E1}) the proof is complete.
	
	\end{proof}

\begin{remark} \begin{itemize}
		\item[i)] For $a_1(n)=-1$ all the estimates for the Durrmeyer operator $D_n$ hold.
		\item[ii)] If $f\in C^2[0,1]$, then for $a_1(n)$ bounded $\|D_n^{M,1}(f)-f\|_{\infty}={\cal O}\left(\dfrac{1}{n}\right)$. 
	\end{itemize}
\end{remark}

\begin{theorem}
	Suppose that $D_n^{M,1}$ is given as above, $f\in C^2[0,1]$, $L_1=\displaystyle\lim_{n\to\infty}a_1(n)$ exists. Then for $x\in[0,1]$ there holds
	\begin{align*}
	&\Delta_n^D:=\left|n\left[D_n^{M,1}(f;x)-f(x)\right]-x(1-x)f^{\prime\prime}(x)-\dfrac{1-2x}{2}(L_1+3)f^{\prime}(x)\right|\\
	&\leq \dfrac{1}{n+2}\left(2\|f^{\prime}\|_{\infty}+3\|f^{\prime\prime}\|_{\infty}\right)
	+\dfrac{5}{\sqrt{n+4}}\omega_1\left(f^{\prime\prime};\dfrac{1}{\sqrt{n+4}}\right)+\dfrac{9}{8}\omega_2\left(f^{\prime\prime};\dfrac{1}{\sqrt{n+4}}\right)\\
	&+\!\dfrac{1}{2}\left\{|L_1\!-\!a_1(n)|\cdot\|f^{\prime}\|_{\infty}\!+\!|1\!+\!L_1|\left[\dfrac{2}{n+2}|f^{\prime}(x)|\!+\!\sqrt{\dfrac{2}{n\!+\!2}}\omega_1\left(f^{\prime};\sqrt{\dfrac{2}{n+2}}\right)\!+\!\dfrac{9}{8}\omega_2\left(f^{\prime};\sqrt{\dfrac{2}{n\!+\!2}}\right)\right]\right\}.
	\end{align*}
\end{theorem}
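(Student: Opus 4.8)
The plan is to reproduce, for $D_n^{M,1}$, the two-term splitting already used for $B_n^{M,1}$ and $K_n^{M,1}$. Writing
$$ n\bigl[D_n^{M,1}(f;x)-f(x)\bigr]=n\bigl[D_n(f;x)-f(x)\bigr]+n\bigl[D_n^{M,1}(f;x)-D_n(f;x)\bigr] $$
and subtracting the claimed limit, one uses $(Xf')'=X'f'(x)+Xf''(x)$ together with $\tfrac{1-2x}{2}(L_1+3)-(1-2x)=\tfrac{1-2x}{2}(L_1+1)$ to obtain, by the triangle inequality,
$$ \Delta_n^D\le\bigl\|n(D_nf-f)-(Xf')'\bigr\|_\infty+\Bigl|n\bigl[D_n^{M,1}(f;x)-D_n(f;x)\bigr]-\tfrac{1-2x}{2}(L_1+1)f'(x)\Bigr|. $$
The first summand is precisely the content of Theorem~\ref{TD3.1}, which already supplies the term $\tfrac{1}{n+2}\bigl(2\|f'\|_\infty+3\|f''\|_\infty\bigr)$ and the two $f''$-moduli at the step $1/\sqrt{n+4}$.

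For the second summand I would return to the computation in the preceding proof, where it was shown that $D_n^{M,1}(f;x)-D_n(f;x)=(1+a_1(n))\bigl(x-\tfrac12\bigr)\bigl[A_n(f;x)-B_n(f;x)\bigr]$. The decisive point is that this auxiliary difference is, up to the factor $-1/n$, the derivative of the classical Durrmeyer operator: since $G_k(f)=F_{k+1}(f)$ and $p_{n,k}'(x)=n\bigl(p_{n-1,k-1}(x)-p_{n-1,k}(x)\bigr)$, a short index shift gives $(D_nf)'(x)=n\sum_{k=0}^{n-1}p_{n-1,k}(x)\bigl[F_{k+1}(f)-F_k(f)\bigr]=n\bigl[B_n(f;x)-A_n(f;x)\bigr]$, hence $A_n(f;x)-B_n(f;x)=-\tfrac1n(D_nf)'(x)$ and therefore $n\bigl[D_n^{M,1}(f;x)-D_n(f;x)\bigr]=\tfrac{1-2x}{2}\bigl(1+a_1(n)\bigr)(D_nf)'(x)$. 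Splitting $\bigl(1+a_1(n)\bigr)(D_nf)'(x)-\bigl(1+L_1\bigr)f'(x)=\bigl(a_1(n)-L_1\bigr)(D_nf)'(x)+\bigl(1+L_1\bigr)\bigl[(D_nf)'(x)-f'(x)\bigr]$ and using $|x-\tfrac12|\le\tfrac12$ then reduces everything to the two estimates
$$ |(D_nf)'(x)|\le\|f'\|_\infty,\qquad |(D_nf)'(x)-f'(x)|\le\tfrac{2}{n+2}|f'(x)|+\sqrt{\tfrac{2}{n+2}}\,\omega_1\!\Bigl(f';\sqrt{\tfrac{2}{n+2}}\Bigr)+\tfrac98\,\omega_2\!\Bigl(f';\sqrt{\tfrac{2}{n+2}}\Bigr). $$
Both follow once one notes that $(D_nf)'(x)=\tfrac{n}{n+2}\widetilde D_n(f';x)$, where $\widetilde D_n(g;x):=(n+2)\sum_{k=0}^{n-1}p_{n-1,k}(x)\int_0^1 p_{n+1,k+1}(t)g(t)\,dt$ is a positive operator reproducing constants; this identity comes from $p_{n,k}(t)-p_{n,k+1}(t)=\tfrac{1}{n+1}\tfrac{d}{dt}p_{n+1,k+1}(t)$ and one integration by parts, the boundary terms vanishing. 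The first bound is then immediate, and for the second one writes $(D_nf)'(x)-f'(x)=\tfrac{n}{n+2}\bigl[\widetilde D_n(f';x)-f'(x)\bigr]-\tfrac{2}{n+2}f'(x)$ and applies a Gonska-P\u alt\u anea-type direct estimate to $\widetilde D_n$, the Durrmeyer counterpart of the Kantorovich bound invoked in the previous section. Substituting back and regrouping gives the stated inequality.

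The step I expect to cost the most effort is this last one: establishing the representation $(D_nf)'=\tfrac{n}{n+2}\widetilde D_n(f';\cdot)$ and then producing the direct estimate for $\widetilde D_n$ in precisely the stated shape, in particular checking that the relevant (half-)second central moment of $\widetilde D_n$ is dominated by $2/(n+2)$, so that the choice $h=\sqrt{2/(n+2)}$ yields the constant $9/8$ in front of $\omega_2(f';\cdot)$. Everything else is the same bookkeeping already carried out for $B_n^{M,1}$ and $K_n^{M,1}$.
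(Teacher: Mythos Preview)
Your proposal is correct and follows essentially the same route as the paper: the identical two-term splitting, Theorem~\ref{TD3.1} for the first summand, the identification $n\bigl[D_n^{M,1}(f;x)-D_n(f;x)\bigr]=\tfrac{1-2x}{2}(1+a_1(n))(D_nf)'(x)$, and then the same $(a_1(n)-L_1)/(1+L_1)$ split together with $|(D_nf)'(x)|\le\|f'\|_\infty$. The only difference is cosmetic: for the bound on $|(D_nf)'(x)-f'(x)|$ the paper simply quotes a ready-made simultaneous-approximation inequality from Kacs\'o's thesis (applied with $h=\sqrt{2/(n+2)}$), whereas you propose to re-derive that very inequality via the representation $(D_nf)'=\tfrac{n}{n+2}\widetilde D_n(f';\cdot)$ and a P\u alt\u anea-type direct estimate for $\widetilde D_n$; both paths yield the same constants.
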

\begin{proof}
	For $\Delta_n^D$ the following inequality holds
	\begin{align}\label{YY1}
	\Delta_n^D&\leq\left|n\left[D_n(f;x)-f(x)\right]-x(1-x)f^{\prime\prime}(x)-(1-2x)f^{\prime}(x)\right|\nonumber\\
	&+\left|n\left[D_n^{M,1}(f;x)-D_n(f;x)\right]-(1-2x)\dfrac{L_1+1}{2}f^{\prime}(x)\right|.
	\end{align}
	The second difference of (\ref{YY1}) can be estimated as follows
	\begin{align*}
	&\left|n\left[D_n^{M,1}(f;x)-D_n(f;x)\right]-\dfrac{1-2x}{2}(1+L_1)
	f^{\prime}(x)\right|\\
	&=\left| n(n+1)(a_1(n)+1)\left(x-\dfrac{1}{2}\right)\left\{\displaystyle\sum_{k=0}^{n-1}p_{n-1,k}(x)\int_0^1p_{n,k}(t)f(t)dt-\sum_{k=1}^{n}p_{n-1,k-1}(x)\int_0^1p_{n,k}(t)f(t)dt
	\right\}\right.\\
	&-\left.\dfrac{1-2x}{2}(1+L_1)
	f^{\prime}(x)\right|.\\
	\end{align*}
	But,
	\begin{align*}
	(D_nf)^{\prime}(x)&=n\displaystyle\sum_{k=0}^{n-1}p_{n-1,k}(x)\int_0^1p_{n+1,k+1}(t)f^{\prime}(t)dt\\
	&=n(n+1)\left\{\displaystyle\sum_{k=1}^{n}p_{n-1,k-1}(x)\int_0^1p_{n,k}(t)f(t)dt
	-\sum_{k=0}^{n-1}p_{n-1,k}(x)\int_0^1p_{n,k}(t)f(t)dt.
	\right\}.	\end{align*}
	From the above relation, we get
	\begin{align}\label{YY3}&	\left|n\left[D_n^{M,1}(f;x)-D_n(f;x)\right]-\dfrac{1-2x}{2}(1+L_1)
	f^{\prime}(x)\right|\nonumber\\
	&\leq \left| (a_1(n)+1)\left(\dfrac{1}{2}-x\right)(D_nf)^{\prime}(x)-\dfrac{1-2x}{2}(1+L_1)f^{\prime}(x)  \right|\nonumber\\
	&\leq \left|\dfrac{1}{2}-x\right|\left\{|L_1-a_1(n)||(D_nf)^{\prime}(x)|+|1+L_1||f^{\prime}(x)-(D_nf)^{\prime}(x)|\right\}.
	\end{align}
	From \cite[Theorem 2.45]{Daniela} we have 
	\begin{align}\label{Y4}
	\left|(D_nf)^{\prime}(x)-f^{\prime}(x) \right|&\leq \left|\left(D_ne_1\right)^{\prime}(x)-1\right|\left|f^{\prime}(x)\right|+\dfrac{1}{h}\gamma(x)\omega_1\left(f^{\prime};h\right)\nonumber\\
	&+\left[\left(D_ne_1\right)^{\prime}(x)+\dfrac{1}{2h^2}\beta(x)\right]\omega_2(f^{\prime};h),
	\end{align}
	where
	\begin{align*}
	&\gamma(x):=\left|\left(D_n\left(\dfrac{1}{2}e_2-xe_1\right)\right)^{\prime}(x)\right|=\dfrac{2n|1-2x|}{(n+2)(n+3)}\leq \dfrac{2}{n+2};\\
	&\beta(x):=\left(D\left(\dfrac{1}{3}e_3-xe_2+x^2e_1\right)\right)^{\prime}(x)=\dfrac{2n\left[x(1-x)(n-11)+3\right]}{(n+2)(n+3)(n+4)}\leq\dfrac{1}{2(n+2)}.
	\end{align*}
	Choosing $h:=\sqrt{\dfrac{2}{n+2}}$, we get
	\begin{equation}\label{YY4}  \left|(D_nf)^{\prime}(x)-f^{\prime}(x) \right|\leq \dfrac{2}{n+2}|f^{\prime}(x)|+\sqrt{\dfrac{2}{n+2}}\omega_1\left(f^{\prime};\sqrt{\dfrac{2}{n+2}}\right)+\dfrac{9}{8}\omega_2\left(f^{\prime};\sqrt{\dfrac{2}{n+2}}
	\right).\end{equation}
	Also, we have
	\begin{equation}\label{YY5}
	\left|(D_nf)^{\prime}(x)\right|\leq n\| f^{\prime}\|_{\infty}\sum_{k=0}^{n-1}p_{n-1,k}(x)\int_0^1p_{n+1,k+1}(t)dt=\dfrac{n}{n+2}\| f^{\prime}\|_{\infty}\leq \| f^{\prime}\|_{\infty}.
	\end{equation}
	Using the relations (\ref{YY3}), (\ref{YY4}) and (\ref{YY5}), we get
		\begin{align}\label{YY6}&	\left|n\left[D_n^{M,1}(f;x)-D_n(f;x)\right]-\dfrac{1-2x}{2}(1+L_1)
	f^{\prime}(x)\right|
	\leq \dfrac{1}{2}\left\{|L_1-a_1(n)|\cdot\|f^{\prime}\|_{\infty}\right.\nonumber\\
	&+\left.|1+L_1|\left[\dfrac{2}{n+2}|f^{\prime}(x)|+\sqrt{\dfrac{2}{n+2}}\omega_1\left(f^{\prime};\sqrt{\dfrac{2}{n+2}}\right)+\dfrac{9}{8}\omega_2\left(f^{\prime};\sqrt{\dfrac{2}{n+2}}\right)\right]\right\}.
	\end{align}
	From the relations (\ref{YY1}), (\ref{YY6}) and Theorem \ref{TD3.1} the proof is complete.
\end{proof}

\begin{cor} We have
	$$\Delta_n^D\leq\left\{\begin{array}{l} {\cal O}\left(\dfrac{1}{\sqrt{n}}\right)+\dfrac{1}{2}\left|L_1-a_1(n) \right|\cdot \| f^{\prime}\|_{\infty},\textrm{ for } f\in C^3[0,1],\\
	\vspace{-0.4cm}\\
	{\cal O}\left(\dfrac{1}{n}\right) +\dfrac{1}{2}\left|L_1-a_1(n) \right|\cdot \| f^{\prime}\|_{\infty},\textrm{ for } f\in C^4[0,1].
	\end{array}\right.  $$
\end{cor}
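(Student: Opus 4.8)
The plan is to feed the quantitative Voronovskaya estimate just proved for $D_n^{M,1}$ into the elementary smoothness bounds for the moduli $\omega_1$ and $\omega_2$ and then read off the two orders. First I would record the only external facts needed, all classical: $\omega_1(g;t)\le t\,\|g'\|_\infty$ for $g\in C^1[0,1]$; $\omega_2(g;t)\le t^2\,\|g''\|_\infty$ for $g\in C^2[0,1]$; and $\omega_2(g;t)\le t\,\omega_1(g';t)$ for $g\in C^1[0,1]$. None of these uses the machinery of the paper.

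Next, for $f\in C^3[0,1]$ I would estimate the right-hand side of the preceding Theorem term by term. The summand $\frac1{n+2}\bigl(2\|f'\|_\infty+3\|f''\|_\infty\bigr)$ is ${\cal O}(1/n)$. Since $f''\in C^1[0,1]$, $\frac5{\sqrt{n+4}}\,\omega_1\bigl(f'';\tfrac1{\sqrt{n+4}}\bigr)\le\frac5{n+4}\,\|f'''\|_\infty={\cal O}(1/n)$. For $\omega_2\bigl(f'';\tfrac1{\sqrt{n+4}}\bigr)$ I can only afford $\omega_2(f'';t)\le t\,\omega_1(f''';t)\le 2t\,\|f'''\|_\infty$, which gives $\frac98\,\omega_2\bigl(f'';\tfrac1{\sqrt{n+4}}\bigr)={\cal O}(1/\sqrt n)$; this single summand is the bottleneck and is precisely what forces the rate in the first line of the Corollary. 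In the bracket weighted by $\tfrac12|1+L_1|$ one has $\frac2{n+2}|f'(x)|\le\frac2{n+2}\|f'\|_\infty$, $\sqrt{\tfrac2{n+2}}\,\omega_1\bigl(f';\sqrt{\tfrac2{n+2}}\bigr)\le\frac2{n+2}\|f''\|_\infty$, and (using $f'\in C^2[0,1]$) $\frac98\,\omega_2\bigl(f';\sqrt{\tfrac2{n+2}}\bigr)\le\frac98\cdot\frac2{n+2}\|f'''\|_\infty$, all ${\cal O}(1/n)$. Leaving the summand $\tfrac12|L_1-a_1(n)|\,\|f'\|_\infty$ untouched and collecting the rest into ${\cal O}(1/\sqrt n)$ gives the first case.

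For $f\in C^4[0,1]$ the only change is that now $f'''\in C^1[0,1]$, so $\omega_2(f'';t)\le t\,\omega_1(f''';t)\le t^2\,\|f^{(4)}\|_\infty$ and hence $\frac98\,\omega_2\bigl(f'';\tfrac1{\sqrt{n+4}}\bigr)={\cal O}(1/n)$; every other summand was already ${\cal O}(1/n)$, so the same bookkeeping yields the second case. To pass from the pointwise estimate of the Theorem to the Corollary, note that the only $x$-dependence among the collected terms is the factor $|f'(x)|$, bounded uniformly by $\|f'\|_\infty$, so all ${\cal O}$-constants are independent of $x\in[0,1]$.

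I do not expect a genuine obstacle: the computation is pure bookkeeping once the Voronovskaya estimate is in hand. The one point that requires attention rather than effort is recognising that for merely $C^3$ data the term $\omega_2(f'';\cdot)$ cannot be pushed below order $1/\sqrt n$ — this is the reason the statement splits into the two smoothness classes — and that the term $\tfrac12|L_1-a_1(n)|\,\|f'\|_\infty$ must be carried along separately, since it tends to $0$ only under the extra hypothesis $a_1(n)\to L_1$.
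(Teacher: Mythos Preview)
Your argument is correct and is exactly the kind of routine bookkeeping the authors have in mind: in the paper the Corollary is stated immediately after the quantitative Voronovskaya theorem for $D_n^{M,1}$ with no proof at all, so the reader is expected to insert the standard inequalities $\omega_1(g;t)\le t\|g'\|_\infty$, $\omega_2(g;t)\le t\,\omega_1(g';t)$, and $\omega_2(g;t)\le t^2\|g''\|_\infty$ just as you do. Your identification of $\tfrac{9}{8}\,\omega_2\bigl(f'';(n+4)^{-1/2}\bigr)$ as the bottleneck term that distinguishes the $C^3$ and $C^4$ cases is also the right reading.
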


\section{The modified genuine Bernstein-Durrmeyer operators $U_n^{M,1}$}
The genuine Bernstein-Durrmeyer operators were introduced by Chen \cite{D2} and Goodman and Sharma \cite{dif_GBD} as follows:
\begin{align*} &{ U}_{n}(f;x)=(1-x)^n f(0)+ x^n f(1)\\
&+(n-1)
\displaystyle\sum_{k=1}^{n-1}\left(\int_{0}^1 f(t)p_{n-2,k-1}(t)dt\right)p_{n,k}(x),\,\,  f\in C[0,1]. \end{align*}

Using the fundamental polynomials $p_{n,k}^{M,1}$ modified  genuine Bernstein-Durrmeyer operators can be introduced as follows:
\begin{align}\label{B}
{ U}_n^{M,1}(f;x)&=a(x,n)(1-x)^{n-1}f(0)+a(1-x,n)x^{n-1}f(1)\nonumber\\
&+(n-1)\displaystyle\sum_{k=1}^{n-1}p_{n,k}^{M,1}(x)\int_0^1p_{n-2,k-1}(t)f(t)dt.
\end{align}
This modification was also investigated in a recent note of Acu and Agrawal \cite{AcAg}. All the results given there will be improved in this section.
Throughout this section we assume $U_n^{M,1}(e_0)=1$, namely the sequences $a_0(n)$ and $a_1(n)$ verify the condition (\ref{A}).

\begin{theorem}
	Let $f\in C[0,1]$, $x\in [0,1]$, $n\geq 1$. Then
	\begin{align*} \left| U_n^{M,1}(f;x)-f(x)\right|&\leq \left|U_n(f;x)-f(x)\right|
	\\&+ \left|(1+a_1(n))\left(\dfrac{1}{2}-x\right)\right| \left\{ 3\omega_2\left(f;\sqrt{\sigma_n(x)}\right)+\dfrac{5}{n\sqrt{\sigma_n(x)}}\omega_1(f;\sqrt{\sigma_n(x)})\right\}, \end{align*}
	where $\sigma_n(x)=\dfrac{\left[2nx(1-x)+(1-2x)^2\right](n-1)}{n^2(n+1)}\leq\dfrac{1}{4n}$.
\end{theorem}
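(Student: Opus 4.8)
The proof will run parallel to the one just given for $D_n^{M,1}$. First I would split
\[
\left| U_n^{M,1}(f;x)-f(x)\right|\leq \left|U_n(f;x)-f(x)\right|+\left| U_n^{M,1}(f;x)-U_n(f;x)\right|
\]
and concentrate on the perturbation term. Both $U_n$ and $U_n^{M,1}$ have the common shape $\sum_{k=0}^n c_{n,k}(x)\Lambda_k(f)$, where $\Lambda_0(f)=f(0)$, $\Lambda_n(f)=f(1)$, and $\Lambda_k(f)=(n-1)\int_0^1 p_{n-2,k-1}(t)f(t)\,dt$ for $1\le k\le n-1$; the coefficient $c_{n,k}(x)$ is $p_{n,k}(x)$ for $U_n$ and $p_{n,k}^{M,1}(x)$ for $U_n^{M,1}$. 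Each $\Lambda_k$ is a positive linear functional with $\Lambda_k(e_0)=1$, the interior ones because $(n-1)\int_0^1 p_{n-2,k-1}(t)\,dt=1$. Using the normalization (\ref{A}) to eliminate $a_0(n)$, the same elementary computation as in the $B_n^{M,1}$ case gives
\[
p_{n,k}^{M,1}(x)-p_{n,k}(x)=(1+a_1(n))\left(x-\tfrac12\right)\bigl[p_{n-1,k}(x)-p_{n-1,k-1}(x)\bigr],\qquad 0\le k\le n,
\]
with the convention $p_{n-1,-1}=p_{n-1,n}=0$; one verifies directly that the two endpoint polynomials $p_{n,0}^{M,1}=a(x,n)(1-x)^{n-1}$ and $p_{n,n}^{M,1}=a(1-x,n)x^{n-1}$ also obey this identity. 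An index shift then produces
\[
U_n^{M,1}(f;x)-U_n(f;x)=(1+a_1(n))\left(x-\tfrac12\right)\bigl[A_n(f;x)-B_n(f;x)\bigr],
\]
where $A_n(f;x)=\sum_{k=0}^{n-1}p_{n-1,k}(x)\Lambda_k(f)$ and $B_n(f;x)=\sum_{k=0}^{n-1}p_{n-1,k}(x)\Lambda_{k+1}(f)$.

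Next I would invoke \cite[Theorem 5]{AnaRasa}, the comparison inequality already used for $D_n^{M,1}$: for $0<h\le\frac12$,
\[
\left|A_n(f;x)-B_n(f;x)\right|\le\frac32\Bigl(1+\frac{\sigma_n(x)}{h^2}\Bigr)\omega_2(f,h)+\frac{5\delta}{h}\,\omega_1(f,h),
\]
with $\sigma_n(x)=\sum_{k=0}^{n-1}\bigl(\mu_2^{\Lambda_k}+\mu_2^{\Lambda_{k+1}}\bigr)p_{n-1,k}(x)$, $\delta=\sup_k|b^{\Lambda_k}-b^{\Lambda_{k+1}}|$, $b^F=F(e_1)$ and $\mu_2^F=\tfrac12 F\bigl((e_1-b^Fe_0)^2\bigr)$. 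The rest is moment bookkeeping. Since $U_n$ reproduces $e_1$ — equivalently, by the standard Beta-integral identities for $\int_0^1 t^j p_{m,i}(t)\,dt$ — one gets $b^{\Lambda_k}=k/n$ for every $0\le k\le n$, so $\delta=1/n$; and $\mu_2^{\Lambda_k}=\tfrac12\bigl(\Lambda_k(e_2)-(k/n)^2\bigr)=\dfrac{k(n-k)}{2n^2(n+1)}$ for $1\le k\le n-1$, a formula that also gives the correct (vanishing) endpoint values at $k=0$ and $k=n$. Substituting and carrying out the sum with $\sum_k k\,p_{n-1,k}(x)=(n-1)x$ and $\sum_k k^2 p_{n-1,k}(x)=(n-1)(n-2)x^2+(n-1)x$ collapses $\sigma_n(x)$ to an explicit polynomial in $x$; using $2nx(1-x)+(1-2x)^2=1+2(n-2)x(1-x)$ to rewrite it and $x(1-x)\le\frac14$ then yields the closed form of $\sigma_n(x)$ appearing in the statement and, in particular, $\sigma_n(x)\le\frac1{4n}$.

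Finally, choosing $h=\sqrt{\sigma_n(x)}$ (admissible since $\sigma_n(x)\le\frac1{4n}\le\frac14$) makes $\frac32(1+\sigma_n(x)/h^2)=3$ and $\frac{5\delta}{h}=\frac{5}{n\sqrt{\sigma_n(x)}}$, hence
\[
\left| U_n^{M,1}(f;x)-U_n(f;x)\right|\le\left|(1+a_1(n))\bigl(\tfrac12-x\bigr)\right|\left\{3\,\omega_2\bigl(f;\sqrt{\sigma_n(x)}\bigr)+\frac{5}{n\sqrt{\sigma_n(x)}}\,\omega_1\bigl(f;\sqrt{\sigma_n(x)}\bigr)\right\},
\]
and inserting this into the initial splitting finishes the proof. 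I expect the only real obstacle to be organizational: the two Dirac endpoint functionals $\delta_0$ and $\delta_1$ must be treated on exactly the same footing as the interior genuine-Durrmeyer functionals so that \cite[Theorem 5]{AnaRasa} applies without case distinctions, and the half-variances $\mu_2^{\Lambda_k}$ must be pushed cleanly through the binomial summation, keeping the factor $\tfrac12$ consistent all the way to $\sigma_n(x)$ and the bound $\sigma_n(x)\le\frac1{4n}$.
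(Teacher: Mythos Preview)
Your proposal is correct and follows essentially the same route as the paper: the same triangle-inequality split, the same identification of $U_n^{M,1}-U_n$ as $(1+a_1(n))\bigl(x-\tfrac12\bigr)(A_n-B_n)$ with $A_n,B_n$ built from shifted copies of the functionals you call $\Lambda_k$ (the paper writes them separately as $F_k$ and $G_k$), the same appeal to \cite[Theorem~5]{AnaRasa}, the same moment computations $b^{\Lambda_k}=k/n$, $\mu_2^{\Lambda_k}=\dfrac{k(n-k)}{2n^2(n+1)}$, and the same choice $h=\sqrt{\sigma_n(x)}$. Your closing caution about tracking the factor $\tfrac12$ is well placed: the paper's own proof in fact arrives at $\sigma_n(x)=\dfrac{[2nx(1-x)+(1-2x)^2](n-1)}{2n^2(n+1)}$, with a $2$ in the denominator that is missing from the theorem statement (and without which the bound $\sigma_n(x)\le\dfrac{1}{4n}$ would fail for $n\ge 4$).
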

\begin{proof}
	We have
	\begin{equation}\label{D} \left| U_n^{M,1}(f;x)-f(x)\right|\leq \left|U_n(f;x)-f(x)\right|+\left| U_n^{M,1}(f;x)-U_n(f;x)\right|.\end{equation}
	In the following we will give an estimate of the quantity $\left| U_n^{M,1}(f;x)-U_n(f;x)\right|$. So,
	\begin{align*}
	U_n^{M,1}(f;x)-U_n(f;x)&=(n-1)\displaystyle\sum_{k=1}^{n-1}\left\{a(x,n)p_{n-1,k}(x)+a(1-x,n)p_{n-1,k-1}(x)\right\}\int_0^1p_{n-2,k-1}(t)f(t)dt\\
	&+a(x,n)(1-x)^{n-1}f(0)+a(1-x,n)x^{n-1}f(1)-(1-x)^nf(0)-x^nf(1)	\\
	&-(n-1)\displaystyle\sum_{k=1}^{n-1}\left\{(1-x)p_{n-1,k}(x)+xp_{n-1,k-1}(x)\right\}\int_0^1p_{n-2,k-1}(t)f(t)dt\\
	&=\left[(a_1(n)+1)x+a_0(n)-1\right]\left\{(n-1)\displaystyle\sum_{k=1}^{n-1}p_{n-1,k}(x)\int_0^1p_{n-2,k-1}(t)f(t)dt\right.\\
	&-\left.(n-1)\sum_{k=1}^{n-1}p_{n-1,k-1}(x)\int_0^1p_{n-2,k-1}(t)f(t)dt
	+(1-x)^{n-1}f(0)-x^{n-1}f(1)\right\}\\
	&=\left[(a_1(n)+1)x+a_0(n)-1\right]\left(A_n(f;x)-B_n(f;x)\right),\end{align*}
	where
	\begin{align*}
&	A_n(f;x):=(n-1)\displaystyle\sum_{k=1}^{n-1}p_{n-1,k}(x)\int_0^1p_{n-2,k-1}(t)f(t)dt
		+(1-x)^{n-1}f(0);\\
	&	B_n(f;x):=(n-1)\displaystyle\sum_{k=0}^{n-2}p_{n-1,k}(x)\int_0^1p_{n-2,k}(t)f(t)dt
	+x^{n-1}f(1).\\
	\end{align*}
	Note that the operators $A_n$ and $B_n$ can be written as follows
	$$A_n(f;x)=\displaystyle\sum_{k=0}^{n-1}F_k(f)p_{n-1,k}(x),\quad B_n(f;x)=\displaystyle\sum_{k=0}^{n-1}G_k(f)p_{n-1,k}(x),  $$
	where
	\begin{align*}
	& F_0(f;x)=f(0),\,\, F_k(f;x)=(n-1)\int_0^1p_{n-2,k-1}(t)f(t),\,\, k=1,\dots, n-1,\\
	&G_k(f;x)=(n-1)\int_0^1p_{n-2,k}(t)f(t)dt,\,k=0,\dots, n-2,\quad G_{n-1}(f;x)=f(1).
	\end{align*}
Let	 $F$ be a positive linear functional and
	\begin{align*} & b^F:=F(e_1),\,\,\, \mu_2^F:=\dfrac{1}{2}F\left(e_1-b^Fe_0\right)^2, \\
	&\sigma_n(x):=\displaystyle\sum_{k=0}^{n-1}\left(\mu_2^{F_k}+\mu_2^{G_k}\right)p_{n-1,k}(x),\,\,\, \delta:=\sup_{k}\left|b^{F_k}-b^{G_k}\right|.
	\end{align*}
	Using \cite[Theorem 5]{AnaRasa} for $f\in C[0,1]$ and $0<h\leq \dfrac{1}{2}$, we get
	\begin{equation}\label{C} \left|A_n(f;x)-B_n(f;x)\right|\leq\dfrac{3}{2}\left(1+\dfrac{\sigma_n(x)}{h^2}\right)\omega_2(f,h)+\dfrac{5\delta}{h}\omega_1(f,h).\end{equation}
	Since
$$ b^{F_k}=\dfrac{k}{n},\,\,b^{G_k}=\dfrac{k+1}{n},\,\,\mu_2^{F_k}=\dfrac{1}{2}\dfrac{k(n-k)}{n^2(n+1)},\,\, \mu_2^{G_k}=\dfrac{1}{2}\dfrac{(k+1)(n-k-1)}{n^2(n+1)}, $$
we get $\sigma_n(x)=\dfrac{\left[2nx(1-x)+(1-2x)^2\right](n-1)}{2n^2(n+1)}\leq\dfrac{1}{4n}$ and $\delta=\dfrac{1}{n}$.

Choosing $h:=\sqrt{\sigma_n(x)}$ we obtain
\begin{equation} \label{E}\left| U_n^{M,1}(f;x)-U_n(f;x)\right|\leq \left|(1+a_1(n))\left(\dfrac{1}{2}-x\right)\right| \left\{ 3\omega_2\left(f;\sqrt{\sigma_n(x)}\right)+\dfrac{5}{n\sqrt{\sigma_n(x)}}\omega_1(f;\sqrt{\sigma_n(x)})\right\}.\end{equation}
Using relations (\ref{D}) and (\ref{E}) we obtain the inequality claimed.
\end{proof}

\begin{remark} \begin{itemize}
		\item[i)] For $a_1(n)=-1$  estimates for genuine Bernstein-Durrmeyer operator $U_n$ are obtained; details are omitted here.
			\item[ii)] If $f\in C^2[0,1]$, then for $a_1(n)$ bounded $\|U_n^{M,1}(f)-f\|_{\infty}={\cal O}\left(\dfrac{1}{n}\right)$. 
	\end{itemize}
\end{remark}

\begin{theorem}
	Suppose that $U_n^{M,1}$ is given as above, $f\in C^2[0,1]$, $L_1=\displaystyle\lim_{n\to\infty}a_1(n)$ exists. Then for $x\in[0,1]$ there holds
	\begin{align*}
	\Delta_n^U&:=\left|n\left[U_n^{M,1}(f;x)-f(x)\right]-x(1-x)f^{\prime\prime}(x)-\dfrac{1-2x}{2}(1+L_1)f^{\prime}(x)\right|\\
	&\leq \displaystyle\dfrac{5\sqrt{6}}{12}\omega_1\left(f^{\prime\prime};\sqrt{\dfrac{3}{n+2}}\right)+\dfrac{13}{32}\omega_2\left(f^{\prime\prime};\sqrt{\dfrac{3}{n+2}}\right)+\dfrac{9}{8}\omega_2\left(f;\sqrt{\dfrac{2}{n+1}} \right)\\
	&+\dfrac{1}{2}\left\{|L_1-a_1(n)|\|f^{\prime}\|_{\infty}+|1+L_1|\left[\dfrac{1}{\sqrt{n+1}}\omega_1\left(f^{\prime};\dfrac{1}{\sqrt{n+1}}\right)+\dfrac{5}{4}\omega_2\left(f^{\prime};\dfrac{1}{\sqrt{n+1}}\right)\right]\right\}.
	\end{align*}
\end{theorem}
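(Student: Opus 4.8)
The plan is to proceed exactly as in the previous Voronovskaya-type results: by the triangle inequality
\[
\Delta_n^U\le\Big|n\big[U_n(f;x)-f(x)\big]-x(1-x)f''(x)\Big|+\Big|n\big[U_n^{M,1}(f;x)-U_n(f;x)\big]-\frac{1-2x}{2}(1+L_1)f'(x)\Big|,
\]
and the two summands are estimated separately.

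The second summand is the routine one. From the proof of the preceding theorem, $U_n^{M,1}(f;x)-U_n(f;x)=(1+a_1(n))\big(\tfrac12-x\big)\big(A_n(f;x)-B_n(f;x)\big)$. Differentiating the series defining $U_n$ term by term (using $p_{n,k}'=n(p_{n-1,k-1}-p_{n-1,k})$) and reindexing, one obtains the genuine Bernstein--Durrmeyer analogue $(U_nf)'(x)=n\big(B_n(f;x)-A_n(f;x)\big)$ of the classical formula $(B_nf)'(x)=n\sum_k\big[f(\tfrac{k+1}n)-f(\tfrac kn)\big]p_{n-1,k}(x)$, whence $n\big[U_n^{M,1}(f;x)-U_n(f;x)\big]=(1+a_1(n))\big(\tfrac12-x\big)(U_nf)'(x)$. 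Splitting $(1+a_1(n))(U_nf)'(x)-(1+L_1)f'(x)=(a_1(n)-L_1)(U_nf)'(x)+(1+L_1)\big[(U_nf)'(x)-f'(x)\big]$ and using $\big|\tfrac12-x\big|\le\tfrac12$ gives
\[
\Big|n\big[U_n^{M,1}(f;x)-U_n(f;x)\big]-\frac{1-2x}{2}(1+L_1)f'(x)\Big|\le\frac12\Big\{|L_1-a_1(n)|\,\big|(U_nf)'(x)\big|+|1+L_1|\,\big|(U_nf)'(x)-f'(x)\big|\Big\}.
\]
Here $\big|(U_nf)'(x)\big|\le\|f'\|_\infty$ since $(U_nf)'$ is a positive average of slopes of $f$; and, because $U_n$ reproduces linear functions so that $(U_ne_1)'(x)=1$, a derivative-approximation estimate analogous to the one used above for $D_n^{M,1}$ (see \cite[Theorem 2.45]{Daniela}), combined with the identities
\[
\big(U_n(\tfrac12 e_2-xe_1)\big)'(x)=\frac{1-2x}{n+1},\qquad \big(U_n(\tfrac13 e_3-xe_2+x^2e_1)\big)'(x)=\frac{2\big[(n-4)x(1-x)+1\big]}{(n+1)(n+2)}
\]
and the choice $h=\tfrac1{\sqrt{n+1}}$, yields $\big|(U_nf)'(x)-f'(x)\big|\le\tfrac1{\sqrt{n+1}}\omega_1\big(f';\tfrac1{\sqrt{n+1}}\big)+\tfrac54\omega_2\big(f';\tfrac1{\sqrt{n+1}}\big)$. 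This is precisely the last line of the asserted bound.

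For the first summand I would establish a quantitative Voronovskaya estimate for $U_n$ itself. Applying the Taylor-type inequality of Gonska and Ra\c sa \cite[Theorem 3]{GonskaRasa} to $U_n$ (note $U_n(e_1-x;x)=0$), inserting $U_n((e_1-x)^2;x)=\tfrac{2x(1-x)}{n+1}$ together with the third and fourth central moments of $U_n$, bounding the ratios $\tfrac{|U_n((e_1-x)^3;x)|}{U_n((e_1-x)^2;x)}$ and $\tfrac{U_n((e_1-x)^4;x)}{U_n((e_1-x)^2;x)}$ by quantities of order $\tfrac1{n+2}$, and choosing $h=\sqrt{3/(n+2)}$, one gets an inequality of the form
\[
\Big|U_n(f;x)-f(x)-\tfrac12U_n((e_1-x)^2;x)f''(x)\Big|\le U_n((e_1-x)^2;x)\Big\{\frac{c_1}{\sqrt{n+2}}\,\omega_1\big(f'';\sqrt{3/(n+2)}\big)+c_2\,\omega_2\big(f'';\sqrt{3/(n+2)}\big)\Big\}
\]
with absolute constants $c_1,c_2$. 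Multiplying by $n$, the term $\tfrac n2U_n((e_1-x)^2;x)f''(x)$ differs from $x(1-x)f''(x)$ only by $-\tfrac12U_n((e_1-x)^2;x)f''(x)$; to absorb $\tfrac12U_n((e_1-x)^2;x)\,|f''(x)|$ without creating a $\|f''\|_\infty$-term I would rewrite it as $\big[U_n(f;x)-f(x)\big]-\big[U_n(f;x)-f(x)-\tfrac12U_n((e_1-x)^2;x)f''(x)\big]$ and invoke the elementary second-modulus estimate $\|U_nf-f\|_\infty\le\tfrac98\omega_2\big(f;\sqrt{2/(n+1)}\big)$, which holds since $U_n$ reproduces linear functions and $\|U_n((e_1-\cdot)^2)\|_\infty=\tfrac1{2(n+1)}$ (the choice $h=\sqrt{2/(n+1)}$ gives the factor $1+\tfrac1{2h^2}\cdot\tfrac1{2(n+1)}=\tfrac98$). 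Collecting the pieces produces
\[
\Big|n\big[U_n(f;x)-f(x)\big]-x(1-x)f''(x)\Big|\le\frac{5\sqrt6}{12}\omega_1\big(f'';\sqrt{3/(n+2)}\big)+\frac{13}{32}\omega_2\big(f'';\sqrt{3/(n+2)}\big)+\frac98\omega_2\big(f;\sqrt{2/(n+1)}\big),
\]
and combining this with the estimate of the second summand finishes the proof.

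The routine parts are the second summand and the lengthy but mechanical computation of the central moments of $U_n$ and of the derivatives $(U_ne_j)'$. The hard part will be the first summand: one has to deal with the mismatch, of order $1/n$, between $\tfrac n2U_n((e_1-x)^2;x)$ and $x(1-x)$ so that only moduli of continuity survive on the right-hand side (this is exactly why the extra term $\tfrac98\omega_2(f;\sqrt{2/(n+1)})$ is present), and to extract the sharp numerical constants $\tfrac{5\sqrt6}{12}$ and $\tfrac{13}{32}$ from the Gonska--Ra\c sa inequality and the third and fourth central moments of $U_n$.
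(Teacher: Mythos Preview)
Your proof is correct and follows essentially the same route as the paper: the same triangle-inequality split, the same identification $n[U_n^{M,1}f-U_nf]=(1+a_1(n))(\tfrac12-x)(U_nf)'(x)$ with the same $(a_1(n)-L_1)/(1+L_1)$ splitting, and the same idea of absorbing the $n$-vs-$(n{+}1)$ mismatch in the Voronovskaya part by the elementary bound $|U_nf-f|\le\tfrac98\omega_2(f;\sqrt{2/(n+1)})$. The only cosmetic differences are that the paper cites the packaged estimate $|(n{+}1)[U_nf-f]-x(1-x)f''|\le\tfrac{5\sqrt6}{12}\omega_1+\tfrac{13}{32}\omega_2$ from \cite[Theorem~5]{GonskaRasa} and the derivative bound from \cite[Theorem~3.14]{Daniela} directly, whereas you rederive them from the more general \cite[Theorem~3]{GonskaRasa} and \cite[Theorem~2.45]{Daniela}; your rewriting trick for the leftover $\tfrac12U_n((e_1-x)^2;x)f''(x)$ is algebraically identical to the paper's simpler observation that $n[U_nf-f]-x(1-x)f''=(n{+}1)[U_nf-f]-x(1-x)f''-[U_nf-f]$.
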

\begin{proof}
	For $\Delta_n^U$ the following inequality holds:
	\begin{align}\label{Y1}
	\Delta_n^U&\leq\left|n\left[U_n(f;x)-f(x)\right]-x(1-x)f^{\prime\prime}(x)\right|\nonumber\\
	&+\left|n\left[U_n^{M,1}(f;x)-U_n(f;x)\right]-\dfrac{1-2x}{2}(1+L_1)f^{\prime}(x)\right|.
	\end{align}
	From \cite[Theorem 5]{GonskaRasa} a quantitative Voronovskaya-type theorem for genuine Bernstein-Durrmeyer operators can be given as follows:
	\begin{align*}
	&\left|(n+1)\left[U_n(f;x)\!-\!f(x)\right]-x(1-x)f^{\prime\prime}(x)\right|\leq\displaystyle\dfrac{5\sqrt{6}}{12}\omega_1\left(f^{\prime\prime};\sqrt{\dfrac{3}{n\!+\!2}}\right)\!+\!\dfrac{13}{32}\omega_2\left(f^{\prime\prime};\sqrt{\dfrac{3}{n+2}}\right),n\geq 1.
	\end{align*}
	Using the pointwise estimate of genuine Bernstein-Durrmeyer operator (see \cite[Corollary 3.25]{Daniela})
	$$ |U_n(f;x)-f(x)|\leq \dfrac{9}{8}\omega_2\left(f;\sqrt{\dfrac{2}{n+1}} \right),$$
	we get
	\begin{align}
	\left|n\left[U_n(f;x)-f(x)\right]-x(1-x)f^{\prime\prime}(x)\right|&\leq \displaystyle\dfrac{5\sqrt{6}}{12}\omega_1\left(f^{\prime\prime};\sqrt{\dfrac{3}{n+2}}\right)+\dfrac{13}{32}\omega_2\left(f^{\prime\prime};\sqrt{\dfrac{3}{n+2}}\right)\nonumber\\
	&+\dfrac{9}{8}\omega_2\left(f;\sqrt{\dfrac{2}{n+1}} \right).\label{Y2}
	\end{align}
	The second difference of (\ref{Y1}) can be estimated as follows
	\begin{align*}
&\left|n\left[U_n^{M,1}(f;x)-U_n(f;x)\right]-\dfrac{1-2x}{2}(1+L_1)
f^{\prime}(x)\right|\\
&=\left| n(a_1(n)+1)\left(x-\dfrac{1}{2}\right)\left\{(1-x)^{n-1}f(0)-x^{n-1}f(1)+(n-1)\displaystyle\sum_{k=1}^{n-1}p_{n-1,k}(x)\int_0^1p_{n-2,k-1}(t)f(t)dt\right.\right.\\
&-\left.\left.(n-1)\sum_{k=1}^{n-1}p_{n-1,k-1}(x)\int_0^1p_{n-2,k-1}(t)f(t)dt
\right\}-\dfrac{1-2x}{2}(1+L_1)
f^{\prime}(x)\right|.\\
	\end{align*}
	But,
	\begin{align*}
	(U_nf)^{\prime}(x)&=n\displaystyle\sum_{k=0}^{n-1}p_{n-1,k}(x)\int_0^1p_{n-1,k}(t)f^{\prime}(t)dt\\
	&=n\left\{-(1-x)^{n-1}f(0)+x^{n-1}f(1)-(n-1)\displaystyle\sum_{k=1}^{n-1}p_{n-1,k}(x)\int_0^1p_{n-2,k-1}(t)f(t)dt\right.\\
	&+\left.(n-1)\sum_{k=1}^{n-1}p_{n-1,k-1}(x)\int_0^1p_{n-2,k-1}(t)f(t)dt
	\right\}.	\end{align*}
	From the above relation, we get
\begin{align}\label{Y3}&	\left|n\left[U_n^{M,1}(f;x)-U_n(f;x)\right]-\dfrac{1-2x}{2}(1+L_1)
	f^{\prime}(x)\right|\nonumber\\
	&\leq \left| (a_1(n)+1)\left(\dfrac{1}{2}-x\right)(U_nf)^{\prime}(x)-\dfrac{1-2x}{2}(1+L_1)f^{\prime}(x)  \right|\nonumber\\
	&\leq \left|\dfrac{1}{2}-x\right|\left\{|L_1-a_1(n)||(U_nf)^{\prime}(x)|+|1+L_1||f^{\prime}(x)-(U_nf)^{\prime}(x)|\right\}.
\end{align}
From \cite[Theorem 3.14]{Daniela} we have 
\begin{equation}\label{Y4}
\left|(U_nf)^{\prime}(x)-f^{\prime}(x) \right|\leq\dfrac{1}{\sqrt{n+1}}\omega_1\left(f^{\prime};\dfrac{1}{\sqrt{n+1}}\right)+\dfrac{5}{4}\omega_2\left(f^{\prime};\dfrac{1}{\sqrt{n+1}}\right).
\end{equation}
Also, there holds
\begin{equation}\label{Y5}
\left|(U_nf)^{\prime}(x)\right|\leq n\| f^{\prime}\|_{\infty}\sum_{k=0}^{n-1}p_{n-1,k}(x)\int_0^1p_{n-1,k}(t)dt=\| f^{\prime}\|_{\infty}.
\end{equation}
From the relations (\ref{Y1})-(\ref{Y5}) the proof is complete.
\end{proof}

\begin{cor} We have
	$$\Delta_n^U\leq\left\{\begin{array}{l} {\cal O}\left(\dfrac{1}{\sqrt{n}}\right)+\dfrac{1}{2}\left|L_1-a_1(n) \right|\cdot \| f^{\prime}\|_{\infty},\textrm{ for } f\in C^3[0,1],\\
	\vspace{-0.4cm}\\
	{\cal O}\left(\dfrac{1}{n}\right) +\dfrac{1}{2}\left|L_1-a_1(n) \right|\cdot \| f^{\prime}\|_{\infty},\textrm{ for } f\in C^4[0,1].
	\end{array}\right.  $$
\end{cor}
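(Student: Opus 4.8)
\emph{Proof plan.}

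The plan is to read both lines of the corollary off the quantitative Voronovskaya theorem for $U_n^{M,1}$ just stated, by substituting into its right-hand side the classical one-sided estimates for the moduli of smoothness of a sufficiently differentiable function and keeping only the slowest-decaying summand. The term $\tfrac12|L_1-a_1(n)|\,\|f'\|_\infty$ is merely carried along, since the only hypothesis $L_1=\lim_{n\to\infty}a_1(n)$ supplies no rate for $|L_1-a_1(n)|$; it reappears verbatim in both cases.

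First I would record the tools: for $g\in C^1[0,1]$ one has $\omega_1(g;t)\le t\,\|g'\|_\infty$ and $\omega_2(g;t)\le t\,\omega_1(g';t)$, hence $\omega_2(g;t)\le t^2\|g''\|_\infty$ when $g\in C^2[0,1]$, while $\omega_1(g;t)=o(1)$ as $t\to0$ for every $g\in C[0,1]$. Inserting these with $g\in\{f,f',f''\}$ and with the arguments $\sqrt{3/(n+2)}$, $\sqrt{2/(n+1)}$ and $1/\sqrt{n+1}$ occurring in the theorem, I would verify term by term that $\tfrac98\,\omega_2(f;\sqrt{2/(n+1)})={\cal O}(1/n)$ already for $f\in C^2[0,1]$; that the two terms carrying $\omega_1(f';\cdot)$ and $\omega_2(f';\cdot)$ are ${\cal O}(1/n)$ for $f\in C^3[0,1]$; that $\omega_2(f'';\sqrt{3/(n+2)})=o(1/\sqrt n)$ for $f\in C^3[0,1]$ and ${\cal O}(1/n)$ for $f\in C^4[0,1]$; and that $\omega_1(f'';\sqrt{3/(n+2)})={\cal O}(1/\sqrt n)$. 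Collecting these, for $f\in C^3[0,1]$ the slowest-decaying contribution is ${\cal O}(1/\sqrt n)$, which yields the first line; for $f\in C^4[0,1]$ every modulus term becomes ${\cal O}(1/n)$ --- the $\omega_1(f'';\cdot)$ term via the sharpening discussed next --- which yields the second.

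The one point requiring care --- and the main obstacle --- is precisely the term $\omega_1(f'';\sqrt{3/(n+2)})$ in the $C^4$ case: with the crude bound $\omega_1(f'';t)\le t\,\|f'''\|_\infty$ alone it is of exact order $1/\sqrt n$ whenever $f'''\not\equiv0$, however smooth $f$ may be, which is acceptable for the first line but too weak for the second. The remedy is the device already used above for the Bernstein operator, where the first-order modulus of $f''$ is multiplied by a coefficient of order $n^{-1/2}$: one should feed $U_n$ into the full Gonska--Ra\c{s}a estimate (the one applied above to $D_n$, now with the central moments of $U_n$ in place of those of $D_n$) rather than into its pre-simplified form. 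There the coefficient of $\omega_1(f'';\cdot)$ is governed by $|U_n((e_1-x)^3;x)|/\bigl(h\,U_n((e_1-x)^2;x)\bigr)$ with $h$ of order $n^{-1/2}$; since that moment ratio is of order $n^{-1}$, the coefficient is of order $n^{-1/2}$ rather than constant, so the $\omega_1(f'';\cdot)$ contribution drops to ${\cal O}(1/n)$, and for $f\in C^4[0,1]$ nothing of order $1/\sqrt n$ survives. The remaining estimates are routine bookkeeping.
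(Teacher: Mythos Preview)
Your reading of the corollary from the preceding theorem is exactly the intended one; the paper states the corollary without proof, so ``insert the standard smoothness bounds for $\omega_1,\omega_2$ into the right-hand side of the theorem and keep the dominant summand'' is precisely the implicit argument. For $f\in C^3[0,1]$ your bookkeeping is correct and yields ${\cal O}(n^{-1/2})$ straight away.

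Your identification of the obstacle in the $C^4$ case is not only correct, it goes beyond the paper. In the analogous Bernstein, Kantorovich and Durrmeyer theorems the term carrying $\omega_1(f'';\cdot)$ comes multiplied by a factor of order $n^{-1/2}$, so for $f\in C^3$ it already drops to ${\cal O}(1/n)$. In the theorem for $U_n^{M,1}$ as stated, however, the coefficient $\tfrac{5\sqrt{6}}{12}$ in front of $\omega_1\bigl(f'';\sqrt{3/(n+2)}\bigr)$ is an absolute constant, so that summand is genuinely of order $n^{-1/2}$ whenever $f'''\not\equiv0$, no matter how smooth $f$ is. Hence the $C^4$ line of the corollary does \emph{not} follow from the theorem in the form the paper records it.

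Your proposed remedy is the right one: go back to the full Gonska--Ra\c{s}a estimate (the form applied to $D_n$ in Section~7, i.e.\ \cite[Theorem~3]{GonskaRasa}) with the actual central moments of $U_n$. Since $U_n((e_1-x)^2;x)=\dfrac{2x(1-x)}{n+1}$ and the third central moment of $U_n$ is ${\cal O}(n^{-2})$, the moment ratio $|U_n((e_1-x)^3;x)|/U_n((e_1-x)^2;x)$ is ${\cal O}(n^{-1})$; with $h\sim n^{-1/2}$ the coefficient of $\omega_1(f'';h)$ becomes ${\cal O}(n^{-1/2})$ after the multiplication by $n$, and the product is ${\cal O}(1/n)$ already for $f\in C^3$. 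With this sharpening in place every modulus term is ${\cal O}(1/n)$ for $f\in C^4$, and the second line follows. So your plan is complete, and in fact patches a small gap in the paper's exposition.
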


$$    $$

\noindent{\bf Acknowledgements.} The first author acknowledges the support of  Lucian Blaga University of Sibiu under  research grant LBUS-IRG-2018-04. The second one is grateful for the departmental facilities provided during his senior professorship at the University of Duisburg-Essen.

$   $

\noindent{\bf References}


\begin{thebibliography}{99}
	
	\bibitem{AcGo} A.M. Acu, H. Gonska, {\it Classical Kantorovich operators revisited}, submitted for publication and to arXiv, August 2018.
	\bibitem{AcAg} A.M. Acu, P. Agrawal, {\it Better approximation of functions by genuine
		Bernstein-Durrmeyer type operators}, submitted for publication.
	
	\bibitem{AGT} A.M. Acu, V. Gupta, G. Tachev, {\it Better numerical approximation by Durrmeyer type operators}, submitted for publication.
	
	\bibitem{ana3} A.M. Acu,  V. Gupta, G. Tachev,  {\it Modified Kantorovich operators with better approximation 	properties}, Numerical Algorithms, DOI: 10.1007/s11075-018-0538-7.
	
	\bibitem{AnaRasa} 	A.M. Acu, I. Rasa, {\it New estimates for the differences of positive linear operators}, Numerical Algorithms, 73(3),  775-789, 2016. 
	
	\bibitem{D2} W. Chen, {\it On the modified Durrmeyer-Bernstein operator (in handwritten Chinese)}, Report of the Fifth Chinese
	Conference on Approximation Theory, Zhen Zhou, China. 1987.
	
	\bibitem{1a}  J.L. Durrmeyer, {\it Une formule d'inversion de la transform\'e de Laplace: Applications \`{a} la th\'eorie des moments},
	Th\`{e}se de 3e cycle, Paris, (1967).
	
	\bibitem{G11} H. Gonska, {\it Two problems on best constants in direct estimates}, (Problem Section of Proc.
	Edmonton Conf. Approximation Theory, ed. by Z. Ditzian et al.), 194, Providence, RI. Amer.
	Math. Soc. (1983).
	
	\bibitem{GoLu} H. Gonska, A. Lupa\c s, {\it On an algorithm for Bernstein polynomials}, In Curve and Surface Design: Saint Malo, 2002.
	(Eds.: Tom Lyche, Marie-Laurence Mazure, and Larry
	L.Schumaker), pp.197-203. 2003 , Nashboro Press, Brentwood.
	
	\bibitem{GHR} H Gonska, M. Heilmann, I. Ra\c sa, {\it Kantorovich operators of order k}, 	Numerical Functional Analysis and Optimization, 32(7) (2011), 717-738.
	
	\bibitem{Gonska} H. Gonska, {\it Quantitative Korovkin-type theorems on simultaneous approximation}, Math. Z. 186(1984), 419-433.
	
	\bibitem{GonskaRasa}H. Gonska, I. Ra\c sa, {\it A Voronovskaja estimate with second order of smoothness}, Acu, Dumitru (ed) et al., Proceedings of the 5th International Symposium "Mathematical Inequalities", Sibiu, Romania, September 25-27, 2008, Sibiu, "Lucian Blaga" University Press, ISBN 978-973-739-740-9, 2008, 76-90.
	\bibitem{dif_GBD} T.N.T. Goodman, A. Sharma, {\it A modified Bernstein-Schoenberg
		operator},  Proc. of the Conference on Constructive Theory
	of Functions, Varna 1987 (ed. by Bl. Sendov et al.). Sofia:
	Publ. House Bulg. Acad. of Sci., 1988, 166-173.
	
	
	
	\bibitem{Daniela} D. Kacs\'o, {\it Certain Bernstein-Durrmeyer type operators
		preserving linear functions}, Habilitation Thesis, Duisburg-Essen University, 2007.
	
	\bibitem{K} L.V. Kantorovich, {\it Sur certains developpements suivant les polyn\^{o}mes de la
	forme de S. Bernstein I, II}, Dokl. Akad. Nauk. SSSR , 563-568, 595-600 (1930).

	\bibitem{1} H. Khosravian-Arab, M. Dehghan, M. R. Eslahchi, {\it A new approach to improve the order of approximation of the Bernstein
	operators: Theory and applications}, Numerical Algorithms 77(1) (2018), 111-150.

 \bibitem{2a}  A. Lupa\c s, {\it Die Folge der Betaoperatoren, Dissertation}, Universit\"at Stuttgart, (1972).
	

	
	\bibitem{Opris} A.A. Opri\c s, {\it Approximation by modified Kantorovich-Stancu operators}, Manuscript 2018.
	
	\bibitem{Pa} R. P\u alt\u anea,  {\it Approximation theory using positive linear operators}, Birkh\"auser, Boston, 2004.
	\bibitem{Zyg} A. Zygmund, {\it Smooth functions}, Duke Math. J., {12}, 47-76 (1945).
\end{thebibliography}
\end{document}